\documentclass{article}
\usepackage{amsmath, amssymb, amsthm, color, verbatim,enumerate}
\usepackage{graphicx}
\usepackage{hyperref}
\usepackage{tikz}

\newtheorem{prevtheorem}{Theorem}

\newtheorem{question}{Question}
\newtheorem{theorem}{Theorem}[section]
\newtheorem{lemma}[theorem]{Lemma}
\newtheorem{corollary}[theorem]{Corollary}

\DeclareMathOperator{\CD}{\mathcal{CD}}

\title{On groups with few subgroups not in the Chermak--Delgado lattice}
\author{David Burrell\footnote{David Burrell, University of Florida, davidburrell@ufl.edu},\,  William Cocke\footnote{William Cocke, Augusta University School of Computer and Cyber Sciences, wcocke@augusta.edu},\, Ryan McCulloch\footnote{Ryan McCulloch, Elmira College, rmcculloch@elmira.edu}}
\date{Nov 2022}

\begin{document}

\maketitle

\begin{abstract}
We investigate the question of how many subgroups of a finite group are not in its Chermak--Delgado lattice. The Chermak--Delgado lattice for a finite group is a self-dual lattice of subgroups with many intriguing properties. Fasol{\u{a}} and T{\u{a}}rn{\u{a}}uceanu \cite{FT} asked how many subgroups are not in the Chermak--Delgado lattice and classified all groups with two or less subgroups not in the Chermak--Delgado lattice. We extend their work by classifying all groups with less than five subgroups not in the Chermak--Delgado lattice. In addition, we show that a group with less than five subgroups not in the Chermak--Delgado lattice is nilpotent. In this vein we also show that the only non-nilpotent group with five or fewer subgroups in the Chermak--Delgado lattice is $S_3$.
\end{abstract}

\section{Introduction.}
In this paper we examine the question of how many subgroups of a finite group $G$ are not   in the Chermak--Delgado lattice of $G$. Chermak and Delgado \cite{CD89} first defined the Chermak--Delgado lattice in 1989. In 2022, Fasol{\u{a}} and T{\u{a}}rn{\u{a}}uceanu \cite{FT} investigated the question of how large the Chermak--Delgado lattice of a finite group can be. They classified all groups with at most $2$ subgroups not in their Chermak--Delgado lattice. In Theorem \ref{thm: lt5}, we extend their work by classifying all groups with less than five subgroups not in their Chermak--Delgado lattice. 

Throughout the paper, we will use the notation $H\leq G$ to mean that $H$ is a subgroup of $G$. The Chermak--Delgado lattice is a sublattice of the subgroup lattice of $G$. To define the Chermak--Delgado lattice of a finite group $G$, which we write as $\CD(G)$, we first need to define a function $m_{G}$ called the Chermak--Delgado measure. The function $m_{G}$ takes as input a subgroup of $G$ and returns the product of the order of the subgroup and the order of its centralizer in $G$, i.e., \[m_{G}(H) = |H| \cdot |\textbf{C}_G(H)|.\] It is surprising that the set of subgroups with maximum Chermak--Delgado measure have a very special property: they form a lattice, the Chermak--Delgado lattice.

For a finite group $G$, we write $m^{*}(G)$ for the maximum value of $m_G$ on a finite group, i.e., \[m^{*}(G) = \max_{H\leq G}\left\{ m_G(H)\right\}.\] If the group is clear from context, we will shorten $m^{*}(G)$ to just $m^*$. If $H,K \leq G$ satisfy $m_G(H)=m_G(K) = m^{*}$,  then $HK = \langle H, K\rangle$ and $m_G(HK) = m_G(H\cap K) = m^{*}.$ Hence the subgroups with maximum Chermak--Delgado measure form a sublattice of the subgroup lattice of $G$: this is the Chermak--Delgado lattice of $G$. The proof that $\CD(G)$ is a lattice can be found in Isaacs \cite[1.G]{FGT}. 

Before stating our main results, we introduce some new notation. We write $\delta_{\CD}(G)$ for the number of subgroups of $G$ not in $\CD(G)$. Hence, $\delta_{\CD}(G) = 0$, if all subgroups of $G$ are in $\CD(G)$ and $\delta_{\CD}(G) = 1$ if there is single subgroup of $G$ not   in $\CD(G)$. The cyclic groups show that $\delta_{\CD}$ maps onto the natural numbers. 

Fasol{\u{a}} and T{\u{a}}rn{\u{a}}uceanu classified all groups where $\delta_{\CD}(G)\leq 2$ \cite[Theorem 1.1]{FT} and asked for such a classification when $\delta_{\CD}(G) > 2$. In this paper, we provide such a classification when $\delta_{\CD}(G) \leq 4$. To do this, we first investigate how $\delta_{CD}(G)$ influences the structure of a finite group $G$. 
\begin{prevtheorem}\label{thm: nil}
Let $G$ be a finite group. If $\delta_{\CD}(G) < 5$, then $G$ is nilpotent.  
\end{prevtheorem}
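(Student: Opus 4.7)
The plan is to prove the contrapositive: if $G$ is not nilpotent, then $\delta_{\CD}(G)\ge 5$. The workhorse is the classical fact that every member of $\CD(G)$ is subnormal in $G$ (see Isaacs~\cite[1.G]{FGT}), so every non-subnormal subgroup of $G$ automatically contributes to $\delta_{\CD}(G)$.

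Since $G$ is non-nilpotent, some Sylow $p$-subgroup $P$ is non-normal; a subnormal Sylow subgroup is necessarily normal, so $P$ and each of its $n_p=[G:N_G(P)]$ conjugates are non-subnormal. Hence $\delta_{\CD}(G)\ge n_p\ge p+1$. If $p\ge 5$ this already gives $\delta_{\CD}(G)\ge 6$, and if any second prime $q\ne p$ also admits a non-normal Sylow $q$-subgroup, then its $n_q\ge q+1\ge 3$ conjugates supply further non-subnormal subgroups disjoint from the $p$-ones, so $\delta_{\CD}(G)\ge n_p+n_q\ge 6$. We may therefore assume $p$ is the unique prime with a non-normal Sylow and $(p,n_p)\in\{(3,4),\,(2,3)\}$. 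In particular, by Schur--Zassenhaus we write $G=N\rtimes P$ where $N$ is the nilpotent normal Hall $p'$-subgroup, and coprime action gives the count $[N:C_N(P)]=n_p$.

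It remains to produce extra non-$\CD$ subgroups: one more in the $(3,4)$ case, two more in the $(2,3)$ case. The conjugation action of $G$ on its set of Sylow $p$-subgroups embeds $G/P_G$ (where $P_G=\bigcap_{g\in G}P^g$) into $S_{n_p}$, tightly constraining $G$. Natural candidates for the extra non-$\CD$ members are $\{1\}$ and $G$ itself: indeed $\{1\}\in\CD(G)$ forces $m^*(G)=|G|$, and $G\in\CD(G)$ forces $m^*(G)=|G|\,|Z(G)|$. In each sub-case one exhibits a subnormal subgroup whose Chermak--Delgado measure strictly exceeds $\max(|G|,\,|G|\,|Z(G)|)$---a characteristic subgroup of $N$ (which is itself normal and so trivially subnormal) is the natural choice---thereby ruling both $\{1\}$ and $G$ out of $\CD(G)$.

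The main obstacle is the $(2,3)$ case, because non-nilpotent groups with $\{1\},G\in\CD(G)$ do exist (for instance $A_5$, where $m^*(A_5)=|A_5|=60$), so one cannot discard this possibility on general grounds. Under our hypotheses, however, $N$ is a normal nilpotent subgroup of odd order whose Sylow $3$-subgroup $N_3$ is normal in $G$, and combining $[N:C_N(P)]=3$ with the equality $m_G(H)=|H|\,|C_G(H)|$ forces $m_G(N_3)$ (or $m_G(N)$) to exceed $|G|$, excluding $\{1\}$ from $\CD(G)$; a dual argument centered on centralizers handles $G$. The $(3,4)$ analysis is similar but lighter since only one extra subgroup is required. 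Together with the $n_p$ Sylow conjugates, these additional non-$\CD$ subgroups yield $\delta_{\CD}(G)\ge 5$, contradicting the assumption $\delta_{\CD}(G)<5$.
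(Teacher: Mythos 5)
Your reduction to the two cases $(p,n_p)\in\{(3,4),(2,3)\}$ via subnormality of $\CD$-members is sound and runs parallel to the paper's first step (the paper instead uses the stronger fact that \emph{at most one} Sylow subgroup of $G$, over all primes and including the normal ones, can lie in $\CD(G)$, and that if one does then $1\notin\CD(G)$; this already yields $\delta_{\CD}(G)\ge 4$ and leaves only the single boundary case of a $\{2,3\}$-group with $n_2=3$ and a normal Sylow $3$-subgroup). The problem is that the entire remaining content of your argument --- producing one or two further subgroups outside $\CD(G)$ --- is asserted rather than proved, and the mechanism you propose is not available in general. You claim that in each sub-case one can exhibit a subnormal subgroup whose measure strictly exceeds $\max\bigl(|G|,\,|G|\cdot|\mathbf{Z}(G)|\bigr)$, thereby ruling out both $1$ and $G$. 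This is false in the $(3,4)$ case: $SL(2,3)$ has $n_3=4$, a normal Sylow $2$-subgroup, and $m^{*}(SL(2,3))=48=|G|\cdot|\mathbf{Z}(G)|$, so $G\in\CD(G)$ and no subgroup has larger measure. (Your count still survives there because only $1$ needs to be excluded, but the stated strategy does not produce that conclusion.) Likewise, in the $(2,3)$ case the claim that $[N:\mathbf{C}_N(P)]=3$ ``forces'' $m_G(N_3)>|G|$ is not forced by anything you have written: one must bound $|\mathbf{C}_G(N_3)|$ from below against $|P|$, which requires an actual argument about how $P$ acts on $N$, and the ``dual argument'' excluding $G$ is not sketched at all.

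The paper sidesteps exactly this difficulty by arguing contrapositively in a different sense: it assumes $\delta_{\CD}(G)=4$, which (together with the Sylow count) pins down precisely which four subgroups are excluded. From that it first deduces that the normal Sylow $3$-subgroup $S$ must lie in $\CD(G)$ (otherwise $1\in\CD(G)$ forces $m^{*}(G)=|G|=6$ while $m_G(S)=9$), hence that $G\in\CD(G)$, and then uses the divisibility $|G|\mid m^{*}(G)=|S|\cdot|\mathbf{C}_G(S)|$ to place a full Sylow $2$-subgroup inside $\mathbf{C}_G(S)$, making $G$ nilpotent --- a contradiction. To repair your proof you would need either to carry out honest case analyses establishing $1\notin\CD(G)$ (and, in the $(2,3)$ case, a second exclusion) unconditionally, or to adopt the paper's device of using the hypothesis $\delta_{\CD}(G)\le 4$ to identify the excluded subgroups before deriving the contradiction. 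As it stands, the proposal has a genuine gap at its central step.
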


Of note, when $\delta_{\CD}(G) = 5$ and $G$ is not nilpotent, we have the following theorem where $S_3$ is the symmetric group on $3$ symbols. 

\begin{prevtheorem}\label{thm: s3}
Let $G$ be a finite group that is not nilpotent. If $\delta_{\CD}(G) = 5$, then $G \cong S_3$. 
\end{prevtheorem}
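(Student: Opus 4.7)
The measure $m_G$ is constant on conjugacy classes of subgroups, so $\CD(G)$ and its complement are each unions of full conjugacy classes; under our hypothesis the complement is a disjoint union of such classes whose sizes total exactly $5$. Because $G$ is non-nilpotent, there is a prime $p$ with a non-normal Sylow $p$-subgroup $P$, and the congruence $n_p = [G : N_G(P)] \equiv 1 \pmod p$ with $n_p > 1$ forces $n_p \geq p+1 \geq 3$. A crucial initial remark is that at most one non-normal-Sylow conjugacy class can lie outside $\CD(G)$: if $P$ and $Q$ are non-normal Sylows at distinct primes $p, q$ with both classes outside $\CD(G)$, the sizes already sum to at least $(p+1)+(q+1) \geq 7$, exceeding the budget of $5$.

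I would split the argument according to whether $P \in \CD(G)$ or not. In the first case, $P \notin \CD(G)$ gives $n_p \leq 5$, which together with $n_p \equiv 1 \pmod p$ restricts $(p,n_p)$ to $\{(2,3),(2,5),(3,4)\}$. For each such pair I would track further conjugacy classes forced into the complement. The key mechanism is that whenever $Z(G) \neq 1$, we have $m_G(\{e\}) = |G| < |G|\,|Z(G)| = m_G(Z(G)) \leq m^*(G)$, so $\{e\} \notin \CD(G)$; and symmetrically $G$ itself lies outside $\CD(G)$ whenever some subgroup attains measure exceeding $|G|\,|Z(G)|$. Combined with the constraints on the other primes' Sylows from the initial remark, the only surviving configuration should be $(p,n_p)=(2,3)$ with $Z(G)=1$ and the five non-CD subgroups being exactly $\{e\}$, $G$, and the three Sylow $2$-subgroups. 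This pins down $|G| = 6$, and a direct inspection of groups of order $6$ then yields $G \cong S_3$.

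In the second case, $P \in \CD(G)$, so by the intersection-closure of the Chermak--Delgado lattice the $p$-core $O_p(G) = \bigcap_{g \in G} P^g$ lies in $\CD(G)$, whence the minimal Chermak--Delgado subgroup $M(G) \leq O_p(G)$ is a $p$-group. If $M(G) \neq 1$, then no Sylow $q$-subgroup for $q \neq p$ can lie in $\CD(G)$, because every member of $\CD(G)$ contains $M(G)$; since $G$ is non-nilpotent there is such a prime $q \mid |G|$, and if the corresponding Sylow is non-normal its conjugacy class has size $\geq q+1$, adding to the complement and typically overshooting the budget. The genuinely delicate subcase is $M(G) = 1$, in which $\{e\} \in \CD(G)$, $m^*(G) = |G|$, and every $H \in \CD(G)$ satisfies $|H|\,|\mathbf{C}_G(H)| = |G|$; the plan here is to leverage this rigidity to exhibit more than $5$ subgroups forced outside $\CD(G)$.

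The main obstacle is precisely the subcase $M(G) = 1$ in the second case, where no immediate order obstruction rules out $P \in \CD(G)$ for a non-normal Sylow. Once it is dispatched, the remainder of the argument reduces to a finite enumeration controlled by the tight constraint $\delta_{\CD}(G) = 5$, yielding $G \cong S_3$.
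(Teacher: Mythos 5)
There are two genuine gaps here. The first is the case you yourself flag as "the main obstacle": a non-normal Sylow $p$-subgroup $P$ with $P \in \CD(G)$. This case is in fact vacuous, and the fact that closes it is one you never invoke: every member of $\CD(G)$ permutes with each of its conjugates (since $\CD(G)$ is conjugation-invariant and closed under joins, $HH^g$ is a subgroup for all $g$), hence is subnormal by Foguel's theorem, and a subnormal Sylow subgroup is normal. So a non-normal Sylow subgroup can never lie in $\CD(G)$, and your entire second case --- the $O_p(G)$ and $M(G)$ analysis, including the subcase $M(G)=1$ that you leave unresolved --- collapses instantly. As written, however, you explicitly defer that subcase, so the argument is incomplete at the point you identify as hardest, when the missing ingredient is a standard lemma rather than a delicate computation.

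The second gap is the assertion "this pins down $|G|=6$" in the surviving configuration $(p,n_p)=(2,3)$; this is where most of the real work lies and none of it appears in your plan. Knowing that the five subgroups outside $\CD(G)$ are $1$, $G$, and the three Sylow $2$-subgroups does not by itself bound $|G|$. One must show that the normal Sylow $3$-subgroup $H_3$ lies in $\CD(G)$; that $|H_2|=2$ (otherwise a subgroup $1 < X < H_2$ with $X \leq \mathbf{Z}(H_2)$ would lie in $\CD(G)$, forcing $H_3 \leq \mathbf{C}_G(X)$ and hence $X \leq \mathbf{Z}(G) \leq H_3$, a contradiction); that consequently $m^*(G) = |H_3|\cdot|\mathbf{C}_G(H_3)|$ is a power of $3$; and that $H_2$ is self-normalizing (otherwise $H_2Y \in \CD(G)$ for a nontrivial $3$-subgroup $Y \leq \mathbf{N}_G(H_2)$, contradicting the parity of $m^*(G)$), whence $|H_3| = |G:\mathbf{N}_G(H_2)| = 3$ and $G \cong S_3$. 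You also give no argument excluding the competing configurations $(p,n_p) \in \{(2,5),(3,4)\}$ or additional prime divisors; the $(3,4)$ case in particular requires its own lemma in the paper. The pieces of your outline that are sound --- the conjugacy-class budget, the restriction of $(p,n_p)$, and the observation that $1 \notin \CD(G)$ when $\mathbf{Z}(G) \neq 1$ --- do track the paper's counting strategy, but the proposal omits exactly the steps that make the theorem nontrivial.
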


Using Theorem \ref{thm: nil} together with a mixture of computational and theoretical results, we are able to complete the classification of groups with $\delta_{\CD}(G) < 5$. 

\begin{prevtheorem}\label{thm: lt5}
Let $G$ be a finite group. If $\delta_{\CD}(G) <5$, then one of the following holds:

\begin{enumerate}
    \item $\delta_{\CD}(G) < 3$;
    \item $\delta_{\CD}(G) = 3$ and $G$ is cyclic of order either $p\cdot q$ or $p^3$ for primes $p, q$;
    \item $\delta_{\CD}(G) = 4$ and $G$ is isomorphic to either a cyclic group of order $p^4$ for a prime $p$, $C_2\times C_2$, or the extraspecial group of order 27 and exponent 9.
\end{enumerate}
\end{prevtheorem}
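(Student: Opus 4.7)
The plan is to apply Theorem A to reduce to nilpotent $G$, exploit multiplicativity of the Chermak--Delgado measure on coprime Sylow factors to pass to a single prime, and then classify the surviving $p$-groups.

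By Theorem A, $G$ is nilpotent, so $G=P_1\times\cdots\times P_k$ is the direct product of its Sylow subgroups. Because the $|P_i|$ are pairwise coprime, every subgroup of $G$ decomposes uniquely as $H_1\times\cdots\times H_k$ with $H_i\le P_i$, centralizers distribute over the factors, and $m_G(H_1\times\cdots\times H_k)=\prod_i m_{P_i}(H_i)$; hence $\CD(G)=\CD(P_1)\times\cdots\times\CD(P_k)$. Writing $s_i$ for the number of subgroups of $P_i$ and $c_i=|\CD(P_i)|$,
\[
\delta_{\CD}(G) \;=\; \prod_{i=1}^k s_i \;-\; \prod_{i=1}^k c_i.
\]
For any nontrivial $p$-group $P$, the trivial subgroup has $m_P(\{1\})=|P|<|P|\cdot|Z(P)|\le m^*(P)$, so it is never in $\CD(P)$; thus $c_i\le s_i-1$ whenever $P_i$ is nontrivial. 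If $k\ge 2$ this forces $\delta_{\CD}(G)\ge\prod s_i-\prod(s_i-1)$, which equals $s_1+s_2-1$ when $k=2$ and is at least $7$ when $k\ge 3$. Hence $k\le 2$, and for $k=2$ we need $s_1+s_2\le 5$. Since $C_p$ and $C_{p^2}$ are the only nontrivial $p$-groups with $s\le 3$, the options are $(s_1,s_2)\in\{(2,2),(2,3)\}$: direct calculation gives $\delta_{\CD}(C_{pq})=3$ and $\delta_{\CD}(C_{pq^2})=5$, so only $G\cong C_{pq}$ survives, recovering part (2).

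It remains to classify $p$-groups with $\delta_{\CD}<5$. If $P$ is abelian, $m_P$ is maximized only at $P$, so $\CD(P)=\{P\}$ and $\delta_{\CD}(P)=s(P)-1$; the abelian $p$-groups with $s\le 5$ are $\{1\}$, $C_{p^k}$ for $1\le k\le 4$, and $C_2\times C_2$, supplying the cyclic $p^3$, cyclic $p^4$, and Klein four-group entries. For non-abelian $P$, a direct computation confirms $\delta_{\CD}(M_{27})=4$: $M_{27}$ has ten subgroups (one of order $1$, four of order $3$, four of order $9$, and itself), and exactly six of them---the center, the four maximal subgroups, and $M_{27}$---attain $m^*=81$ and thus constitute $\CD(M_{27})$. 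The remaining task is to exclude every other non-abelian $p$-group; I would combine (i) a structural bound forcing $|P|$ to be small once $\delta_{\CD}(P)<5$ with (ii) a finite check (by hand or in GAP) over the resulting short list of groups.

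The principal obstacle is step (i). Plausible candidates to be excluded include the Heisenberg group of order $27$, the dihedral, quaternion, semidihedral, and modular groups of order $16$, and the non-abelian $2$- and $3$-groups of orders $16$, $32$, and $81$. I would look for a lemma bounding $\delta_{\CD}(P)$ from below by a function of $\log_p|P:Z(P)|$ or by the number of subgroups not containing a minimal element of $\CD(P)$; once such a bound restricts attention to $|P|\le 27$ or so, a finite computation finishes the classification.
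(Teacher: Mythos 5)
Your reduction to a single prime is correct and in fact slicker than the paper's: the paper handles the multi-prime nilpotent case by combining Corollary \ref{cor: no_p_groups_nil} with a count of subgroups lying above the center of a nonabelian Sylow subgroup (Lemma \ref{lem: lt5->p-group}), whereas your identity $\delta_{\CD}(G)=\prod s_i-\prod c_i$ together with $c_i\le s_i-1$ disposes of $k\ge 2$ uniformly and immediately yields $C_{pq}$ as the only survivor. Your treatment of the abelian $p$-groups and your verification that $\delta_{\CD}(M_{27})=4$ are also correct.

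However, there is a genuine gap, and you have named it yourself: you have no argument excluding the remaining nonabelian $p$-groups, and this is precisely the technical core of the paper's proof. The difficulty is not a finite check --- a nonabelian $p$-group with $\delta_{\CD}<5$ could a priori have arbitrarily large order, so one needs a structural result that propagates upward. The paper does this in three stages: first, counting subgroups of prime order (Lemmas \ref{lem: order_p_witnesses} and \ref{lem: no_quaternion}) forces $p\le 3$ and yields $p+1$ guaranteed witnesses; second, Lemma \ref{lem: lt5_conds} converts $3\le\delta_{\CD}(G)\le 4$ into local conditions that every subgroup $K\le G$ must satisfy (at most $4$ subgroups of order $p$, almost all subgroups of order $\ge p^2$ containing a fixed central subgroup of order $p$, and $K\in\CD(K)$ for $|K|\ge p^3$), which a computer search shows admit only $C_{p^5}$, $C_{p^4}\times C_p$, and $M_{p^5}$ at order $p^5$; third, Theorem \ref{thm: up} invokes the Berkovich--Janko classification of $A_2$-groups and of minimal nonabelian and minimal nonmetacyclic $p$-groups to show this three-group pattern persists at every higher order, so that $G$ itself must be cyclic, $C_{p^{k-1}}\times C_p$, or $M_{p^k}$ --- none of which has $\delta_{\CD}\le 4$ except $M_{27}$ --- unless $|G|\le 32$ (resp.\ $81$), where a finite computation finishes. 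Your hoped-for bound in terms of $\log_p|P:\mathbf{Z}(P)|$ does not obviously exist (the groups $M_{p^k}$ have $|P:\mathbf{Z}(P)|=p^2$ for all $k$ yet must still be ruled out individually), so without something playing the role of Lemma \ref{lem: lt5_conds} and Theorem \ref{thm: up} your proof is incomplete.
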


Hence, we extend the classification of groups with $\delta_{CD}(G)<3$ to $\delta_{CD}(G) < 5$. 

The rest of the paper proceeds as follows: In Section \ref{sec: properties} we collect some basic properties of the Chermak--Delgado lattice. In Section \ref{sec: nil} we prove Theorem \ref{thm: nil} and in Section \ref{sec: s3} we prove Theorem \ref{thm: s3}. Section \ref{sec: lt5} contains our proof of Theorem \ref{thm: lt5}. Finally in our conclusion, we ask some questions motivated by the study of $\delta_{\CD}$. 

\section{Preliminary material.}\label{sec: properties}

In this section we recall a few well-known properties of the Chermak--Delgado lattice for a finite group $G$. We also present some elementary properties of the Chermak--Delgado lattice for a few families of groups. 

\begin{lemma}\label{lem: contain_center}
Suppose $G$ is a finite group.  If $H \in \CD(G)$, then $\textbf{Z}(G) \leq H$.
\end{lemma}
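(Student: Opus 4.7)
The plan is to exploit the maximality of $m_G(H) = m^*$ by constructing a subgroup containing $H$ whose Chermak--Delgado measure is at least $m_G(H)$, with equality forcing $\mathbf{Z}(G) \leq H$. The natural candidate is the subgroup $H\mathbf{Z}(G)$, which is indeed a subgroup because every element of $\mathbf{Z}(G)$ commutes with every element of $H$.

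First I would compute the centralizer of $H\mathbf{Z}(G)$. Since $\mathbf{Z}(G)$ is centralized by all of $G$, we have
\[
\mathbf{C}_G(H\mathbf{Z}(G)) \;=\; \mathbf{C}_G(H) \cap \mathbf{C}_G(\mathbf{Z}(G)) \;=\; \mathbf{C}_G(H) \cap G \;=\; \mathbf{C}_G(H).
\]
Next I would use the product formula for the order of $H\mathbf{Z}(G)$, namely
\[
|H\mathbf{Z}(G)| \;=\; \frac{|H|\cdot|\mathbf{Z}(G)|}{|H\cap \mathbf{Z}(G)|}.
\]
Combining these two identities yields
\[
m_G(H\mathbf{Z}(G)) \;=\; |H\mathbf{Z}(G)|\cdot|\mathbf{C}_G(H\mathbf{Z}(G))| \;=\; m_G(H)\cdot [\mathbf{Z}(G) : H\cap \mathbf{Z}(G)].
\]

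Finally, since $H \in \CD(G)$, the value $m_G(H) = m^*$ is the maximum of $m_G$, so in particular $m_G(H\mathbf{Z}(G)) \leq m_G(H)$. The displayed equation then forces $[\mathbf{Z}(G) : H \cap \mathbf{Z}(G)] = 1$, i.e., $\mathbf{Z}(G) = H \cap \mathbf{Z}(G) \leq H$, as required.

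There is really no serious obstacle here; the only subtle point is the observation that $\mathbf{C}_G$ is invariant under multiplying the argument by any central subgroup, which is what makes the inequality $m_G(H\mathbf{Z}(G)) \geq m_G(H)$ strict unless $\mathbf{Z}(G) \leq H$. Everything else is a routine index computation.
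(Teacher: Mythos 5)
Your proof is correct and follows essentially the same route as the paper's, which also rests on the identity $\mathbf{C}_G(H) = \mathbf{C}_G(H\mathbf{Z}(G))$ and the maximality of $m_G(H)$; you have simply written out the index computation that the paper leaves implicit.
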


\begin{proof}
If $H \leq G$, then $\textbf{C}_G(H) = \textbf{C}_G(H\textbf{Z}(G))$, and so if $H \in \CD(G)$, and so $H$ attains the maximum Chermak-Delgado measure in $G$, then $\textbf{Z}(G) \leq H$.
\end{proof}

\begin{corollary}\label{cor: p_gp_1_not_in_p}
Suppose that a finite nontrivial group $G$ is a $p$-group.  Then $1 \notin \CD(G)$. \end{corollary}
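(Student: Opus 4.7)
The plan is to derive this immediately from Lemma \ref{lem: contain_center} together with the classical fact that a finite nontrivial $p$-group has nontrivial center. Lemma \ref{lem: contain_center} tells us that every member of $\CD(G)$ contains $\textbf{Z}(G)$, so to rule out the trivial subgroup $1$ from $\CD(G)$ it suffices to verify $\textbf{Z}(G) \neq 1$.

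First I would invoke the standard result that if $G$ is a nontrivial finite $p$-group, then $\textbf{Z}(G) \neq 1$; this is an immediate consequence of the class equation, since every nontrivial conjugacy class of $G$ has size a positive power of $p$, forcing $|\textbf{Z}(G)|$ to be divisible by $p$ and hence at least $p > 1$. Then, assuming for contradiction that $1 \in \CD(G)$, Lemma \ref{lem: contain_center} gives $\textbf{Z}(G) \leq 1$, contradicting $\textbf{Z}(G) \neq 1$. Hence $1 \notin \CD(G)$.

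There is no real obstacle here; the corollary is essentially a one-line consequence of the lemma, with the only ingredient beyond it being the nontriviality of the center of a $p$-group, a textbook fact. The proof should be short and require no further machinery.
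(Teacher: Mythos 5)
Your proof is correct and follows exactly the paper's own argument: the paper likewise deduces the corollary from Lemma \ref{lem: contain_center} together with the nontriviality of the center of a finite nontrivial $p$-group. The only difference is that you spell out the class-equation justification for the latter fact, which the paper simply cites as standard.
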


\begin{proof}
A nontrivial $p$-group has a nontrivial center, and the result follows by Lemma \ref{lem: contain_center}.
\end{proof}

The following result appears in McCulloch \cite[Corollary 7]{M18}.

\begin{lemma}\cite{M18}\label{lem: no_p_groups}
Suppose $G$ is a finite group and $1 \in \CD(G)$.  Then $\CD(G)$ contains no nontrivial $p$-group for any primes $p$.
\end{lemma}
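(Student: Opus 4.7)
My plan is to argue by contradiction: assume both $1 \in \CD(G)$ and $\CD(G)$ contains some nontrivial $p$-subgroup $P$. From $1 \in \CD(G)$ we get $m^{*}(G) = m_G(1) = |G|$ and, by Lemma \ref{lem: contain_center}, $\textbf{Z}(G) = 1$. The strategy is to manufacture from $P$ a \emph{normal} nontrivial $p$-subgroup of $G$ lying in $\CD(G)$, and then contradict $\textbf{Z}(G) = 1$ by a standard fixed-point count.

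The first step is the construction. Using the fact recalled in the introduction that for $H,K \in \CD(G)$ the join in $\CD(G)$ equals the product $HK$, the key observation is that if $H$ and $K$ are $p$-subgroups in $\CD(G)$ then $|HK| = |H|\cdot|K|/|H\cap K|$ is a $p$-power, so $HK$ is again a $p$-subgroup in $\CD(G)$. Hence the $p$-subgroups in $\CD(G)$ are closed under joins and have a unique maximum element $P^{*}$. Since $\CD(G)$ is stable under $G$-conjugation and conjugation preserves $p$-subgroups, $P^{*}$ is conjugation-invariant, so $P^{*} \lhd G$; and it is nontrivial because it contains $P$.

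For the second step, I apply the measure identity to $P^{*}$: the equation $|P^{*}|\cdot|\textbf{C}_G(P^{*})| = |G|$ forces $Q := G/\textbf{C}_G(P^{*})$ to have order $|P^{*}|$, a nontrivial $p$-power, so $Q$ is a nontrivial $p$-group acting faithfully on $P^{*}$ by conjugation (well-defined because $P^{*} \lhd G$). The standard congruence for a $p$-group acting on a finite set of $p$-power order forces the fixed-point set of $Q$ on $P^{*}$ to have cardinality divisible by $p$, hence at least $p$. But the $G$-fixed points of the conjugation action are precisely $P^{*} \cap \textbf{Z}(G) = 1$, a single element. This contradiction proves the lemma.

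The main obstacle is the construction of $P^{*}$: an arbitrary $p$-subgroup of $\CD(G)$ need not be $G$-normal, and its normal closure in $G$ need not remain a $p$-group or stay inside $\CD(G)$. It is the special feature of $\CD(G)$ that its joins are forced to be products (and therefore preserve $p$-power orders) that supplies the canonical normal object; once $P^{*}$ is in hand, the fixed-point count is entirely routine.
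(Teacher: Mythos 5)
Your argument is correct. Note that the paper does not actually prove this lemma---it is quoted from McCulloch \cite{M18} as a known result---so what you have produced is a self-contained proof where the paper offers only a citation. Each step checks out: from $1\in\CD(G)$ you correctly get $m^*(G)=|G|$ and, via Lemma \ref{lem: contain_center}, $\textbf{Z}(G)=1$; the product formula $|HK|=|H|\,|K|/|H\cap K|$ together with the fact that joins in $\CD(G)$ are products shows the $p$-subgroups of $\CD(G)$ are join-closed, and conjugation-invariance of $\CD(G)$ (noted in Section \ref{sec: properties}) makes their unique maximum $P^*$ normal and nontrivial; then $|P^*|\cdot|\textbf{C}_G(P^*)|=|G|$ forces $G/\textbf{C}_G(P^*)$ to be a $p$-group of order $|P^*|$, and the orbit-counting congruence gives fixed-point set of size divisible by $p$ while $P^*\cap\textbf{Z}(G)=1$ has size one. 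You rightly identify the construction of $P^*$ as the crux: an alternative, equivalent endgame would be to note that $G=S\,\textbf{C}_G(P^*)$ for a Sylow $p$-subgroup $S$ and that $P^*\cap\textbf{Z}(S)\neq 1$ is then central in $G$, but your fixed-point count is equally clean. This is a legitimate independent verification of the cited result.
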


We will often use the contrapositive to Lemma \ref{lem: no_p_groups}, which we state below as a lemma.

\begin{corollary}\label{cor: 1_not_in_p}
Suppose $G$ is a finite group.  If a  nontrivial $p$-group is in $\CD(G)$, then $1 \notin \CD(G)$. \end{corollary}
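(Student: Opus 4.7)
The plan is to observe that the statement is exactly the contrapositive of Lemma \ref{lem: no_p_groups}, as the paragraph just before the corollary already announces. Lemma \ref{lem: no_p_groups} says that the hypothesis $1 \in \CD(G)$ forces the conclusion that no nontrivial $p$-subgroup lies in $\CD(G)$. Negating that conclusion gives the hypothesis of the corollary: $\CD(G)$ contains some nontrivial $p$-subgroup. Negating the original hypothesis gives the corollary's conclusion: $1 \notin \CD(G)$. So the entire proof is a one-line appeal to contraposition.

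Because the logical move is a direct contrapositive, there is no genuine obstacle to overcome; in particular, I would not need to reopen McCulloch's argument in \cite{M18}. The only thing to be careful about is simply naming the lemma and pointing out that ``if $P$ then not $Q$'' and ``if $Q$ then not $P$'' are logically equivalent, so that the reader sees why the restatement is licit. I would write the proof in essentially one sentence, citing Lemma \ref{lem: no_p_groups}, and then move on — the corollary's utility lies in how it will be applied in subsequent arguments, not in any additional content beyond the lemma it restates.
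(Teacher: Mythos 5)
Your proposal is correct and matches the paper exactly: the paper gives no separate proof, explicitly introducing the corollary as the contrapositive of Lemma \ref{lem: no_p_groups}. Nothing further is needed.
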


Another well-known result about Chermak--Delgado lattices of finite groups is the following by Brewster and Wilcox \cite[Theorem 2.9]{BW12}.

\begin{lemma}\cite{BW12}\label{lem:dir_prod}
Let $G$ and $H$ be finite groups. Then $\CD(G\times H) = \CD(G) \times  \CD(H)$.
\end{lemma}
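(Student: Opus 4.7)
The plan is to reduce everything to the identity $m_{G\times H}(A\times B)=m_G(A)\,m_H(B)$ for ``rectangular'' subgroups, and then bound an arbitrary $K\le G\times H$ in terms of its two coordinate projections.

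First I would record the product formula. If $A\le G$ and $B\le H$, then $C_{G\times H}(A\times B)=C_G(A)\times C_H(B)$, so
\[
m_{G\times H}(A\times B)=|A\times B|\cdot|C_G(A)\times C_H(B)|=m_G(A)\,m_H(B).
\]
Taking $A\in\CD(G)$ and $B\in\CD(H)$ gives $m^*(G\times H)\ge m^*(G)\,m^*(H)$, and more importantly, this shows every product $A\times B$ with $A\in\CD(G)$, $B\in\CD(H)$ is a candidate for $\CD(G\times H)$.

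Next I would handle an arbitrary subgroup $K\le G\times H$. Let $\pi_G,\pi_H$ be the coordinate projections and set $K_G=\pi_G(K)$, $K_H=\pi_H(K)$. The key observation is that an element $(g,h)\in G\times H$ centralizes $K$ if and only if $g$ centralizes every first coordinate of an element of $K$ and $h$ centralizes every second coordinate, i.e.\ $g\in C_G(K_G)$ and $h\in C_H(K_H)$. Thus
\[
C_{G\times H}(K)=C_G(K_G)\times C_H(K_H).
\]
Since $K\le K_G\times K_H$, we obtain $|K|\le|K_G|\,|K_H|$, and therefore
\[
m_{G\times H}(K)=|K|\cdot|C_G(K_G)|\cdot|C_H(K_H)|\le m_G(K_G)\,m_H(K_H)\le m^*(G)\,m^*(H).
\]
Combined with the reverse inequality from the first paragraph, this gives $m^*(G\times H)=m^*(G)\,m^*(H)$.

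Finally I would characterize equality. If $K\in\CD(G\times H)$, the chain of inequalities above must be tight at every step: $|K|=|K_G|\,|K_H|$ forces $K=K_G\times K_H$, and $m_G(K_G)=m^*(G)$, $m_H(K_H)=m^*(H)$ force $K_G\in\CD(G)$ and $K_H\in\CD(H)$. Conversely, every such product lies in $\CD(G\times H)$ by the computation in the first paragraph. This yields $\CD(G\times H)=\CD(G)\times\CD(H)$ as claimed. The only mild obstacle is justifying $C_{G\times H}(K)=C_G(K_G)\times C_H(K_H)$ cleanly, but this follows immediately from the componentwise nature of conjugation in a direct product, so no further subtlety is required.
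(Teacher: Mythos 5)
Your proof is correct and complete: the product formula for centralizers of rectangular subgroups, the projection bound $m_{G\times H}(K)\le m_G(K_G)\,m_H(K_H)$, and the equality analysis together establish both inclusions. The paper itself states this lemma as a citation to Brewster and Wilcox without proof, and your argument is essentially the standard one from that reference, so no further comparison is needed.
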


Together with Corollary 2.2 above, this implies that in a finite nilpotent group we can greatly restrict the types of groups that appear in the Chermak--Delgado lattice.

\begin{corollary}\label{cor: no_p_groups_nil}
Let $G$ be a finite nilpotent group. If $p$ divides $|G|$, then $p$ divides the order of $H$ for every $H$ in $\CD(G)$. 
\end{corollary}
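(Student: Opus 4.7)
The plan is to reduce to the Sylow factors via the direct product theorem and then invoke Corollary \ref{cor: p_gp_1_not_in_p} on the relevant factor.

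First I would use the fact that a finite nilpotent group $G$ is the internal direct product of its Sylow subgroups, writing $G = P_1 \times \cdots \times P_k$ where $P_i$ is the Sylow $p_i$-subgroup. Applying Lemma \ref{lem:dir_prod} inductively gives $\CD(G) = \CD(P_1) \times \cdots \times \CD(P_k)$, so every $H \in \CD(G)$ has the form $H = H_1 \times \cdots \times H_k$ with $H_i \in \CD(P_i)$.

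Next, suppose $p$ divides $|G|$. Then $p = p_j$ for some index $j$, and $P_j$ is a nontrivial $p$-group. Corollary \ref{cor: p_gp_1_not_in_p} then gives $1 \notin \CD(P_j)$, so the component $H_j$ of any $H \in \CD(G)$ must be a nontrivial subgroup of $P_j$. In particular $p = p_j$ divides $|H_j|$, and hence $p$ divides $|H| = \prod_i |H_i|$.

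There is no real obstacle here; the only thing to be careful about is making sure the direct product decomposition of $\CD(G)$ into Sylow factors is correctly invoked (Lemma \ref{lem:dir_prod} is stated for two factors, so one iterates it). The argument is essentially immediate once the two preliminary results are in hand.
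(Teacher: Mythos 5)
Your proof is correct and follows essentially the same route as the paper: decompose the nilpotent group into Sylow factors, apply Lemma \ref{lem:dir_prod} to split $\CD(G)$ accordingly, and use Corollary \ref{cor: p_gp_1_not_in_p} to see the $p$-component of any member of $\CD(G)$ is nontrivial. The only cosmetic difference is that the paper splits $G$ as $S\times K$ with $K$ a $p$-complement (one application of the lemma) rather than into all Sylow factors at once.
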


\begin{proof}
We can write $G$ as $S \times K$ where $S$ is a Sylow $p$-group of $G$ and $K$ is a $p$-complement of $S$. We know that $\CD(G) = \CD(S) \times \CD(K)$. Since $\CD(S)$ does not contain $1$, we conclude that every element of $\CD(S)$ is a nontrivial $p$-group. Hence $p$ divides any $H$ in $\CD(G)$. 
\end{proof}

Another interesting result about the Chermak--Delgado lattice is that since $H\in \CD(G)$ if and only if $H^g \in \CD(G)$, we have that $HH^g$ is a group for all $g\in G$. Recall that a subgroup $H$ of a group $G$ is called subnormal if 
\[ H = K_0 \lhd\, K_1 \lhd\, \dots \lhd\, K_n = G.\] Foguel showed that groups $H$ that permute with their conjugates, i.e., $HH^g$ is a subgroup, are subnormal \cite{F97}. Hence subgroups in $\CD(G)$ are subnormal in $G$, which is also noted in \cite{BW12,cocke20}.

The following lemma is part of a theme we will see throughout the paper: certain conditions on $G$ and $\CD(G)$ will exclude other groups from being in $\CD(G)$. We say that a set of subgroups $X = \{H_1,\dots,H_k : H_i \leq G\}$ of cardinality $k$ \emph{witnesses} that $\delta_{CD}(G) \geq k$, if for each $H_i\in X$ we have that $H_i \not \in \CD(G)$. 

\begin{lemma} \label{lem: order_p_witnesses}
Let $G$ be a finite group. If $G$ has $k$ subgroups of prime order, then $\delta_{CD}(G) \geq k$.
\end{lemma}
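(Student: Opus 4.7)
The plan is to show that at most one prime-order subgroup of $G$ can lie in $\CD(G)$; then, if none does, the $k$ prime-order subgroups themselves witness $\delta_{\CD}(G)\geq k$, while if exactly one does, Corollary \ref{cor: 1_not_in_p} supplies the trivial subgroup as a replacement witness.

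The heart of the argument is the observation that $\CD(G)$ cannot contain two distinct prime-order subgroups. To establish this, I would pick any two such subgroups $H_1 \ne H_2$ both in $\CD(G)$. Since $\CD(G)$ is closed under meets, $H_1 \cap H_2 \in \CD(G)$, and the primality of $|H_1|$ and $|H_2|$ forces $H_1 \cap H_2 = 1$. But then $1 \in \CD(G)$ while $H_1$ is a nontrivial $p$-subgroup in $\CD(G)$, contradicting Lemma \ref{lem: no_p_groups}.

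With this in hand, let $X$ denote the set of $k$ prime-order subgroups of $G$. Either $X \cap \CD(G) = \emptyset$, in which case $X$ itself witnesses $\delta_{\CD}(G) \geq k$, or some single $H \in X$ lies in $\CD(G)$; in the latter case $X \setminus \{H\}$ gives $k-1$ subgroups outside $\CD(G)$, and Corollary \ref{cor: 1_not_in_p} applied to $H$ adds $1 \notin \CD(G)$ as a $k$-th witness (distinct from the prime-order subgroups). I do not expect any real obstacle here, as the whole argument reduces to the lattice property of $\CD(G)$ together with the two general membership constraints already recorded in this section.
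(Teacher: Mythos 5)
Your proposal is correct and follows essentially the same route as the paper: both arguments rest on closure of $\CD(G)$ under intersection to show at most one prime-order subgroup can lie in $\CD(G)$, and both use Lemma \ref{lem: no_p_groups} (equivalently Corollary \ref{cor: 1_not_in_p}) to substitute the trivial subgroup as the $k$-th witness when one prime-order subgroup does belong to $\CD(G)$. The only difference is cosmetic — the paper cases first on whether $1\in\CD(G)$ while you case on whether some prime-order subgroup is in $\CD(G)$ — and the content is identical.
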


\begin{proof}
If $k=0$, then $G$ is trivial and the result is trivially true.  So suppose $k > 0$.
If $1 \in \CD(G)$, then it follows from Lemma \ref{lem: no_p_groups} that the $k$ subgroups of prime order witness that $\delta_{CD}(G) \geq k$.
    Suppose $1\notin \CD(G)$ and suppose that $H, K \leq G$ are two distinct subgroups of prime order and $H,K \in \CD(G)$. Then $1 = H\cap K \in \CD(G)$, a contradiction.  Hence among the subgroups of prime order, at most one of them can be in $\CD(G)$. Thus the other $k-1$ subgroups of prime order, together with the identity witness that $\delta_{CD}(G) \geq k$. 
\end{proof}

The following lemma concerns generalized quaternion groups. Recall that a group is called a generalized quaternion group if $G\cong \langle a,b | a^{2k}, b^4, a^{k} = b^2, a^b = a^{-1} \rangle.$ We will write the generalized quaternion group of order $m$ as $Q_m$. It is well-known that a finite $p$-group with a single subgroup of order $p$ is either cyclic or generalized quaternion \cite[4.4]{S86}. We also need the following from Fasol{\u{a}} and T{\u{a}}rn{\u{a}}uceanu \cite[The proof of Lemma 2.1]{FT}.

\begin{lemma}[\cite{FT}]\label{lem: quaternion}
If $G$ is a generalized quaternion $2$-group, then \[\left|\CD\left(Q_{2^k}\right)\right| =\begin{cases} 5 & \text{ if $ k = 3$}\\ 1 & \text{ if  $k > 3$.}
\end{cases}\]
\end{lemma}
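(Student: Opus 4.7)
The plan is to compute $\CD(Q_{2^k})$ directly from the subgroup structure of $Q_{2^k}$, splitting into the cases $k=3$ and $k\geq 4$.

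For $k=3$, the group $Q_8$ has exactly six subgroups: the trivial subgroup, the center $\textbf{Z}(Q_8)$ of order $2$, the three cyclic subgroups of order $4$ (all maximal), and $Q_8$ itself. A direct computation gives $m_{Q_8}(1) = 8$ while every other subgroup has measure $16$: each cyclic subgroup of order $4$ is self-centralizing (being abelian of index $2$ in a non-abelian group), and both the center and the full group contribute $2\cdot 8 = 16$. Hence $|\CD(Q_8)| = 5$.

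For $k\geq 4$, let $A=\langle a\rangle$ be the unique cyclic subgroup of $Q_{2^k}$ of index $2$, and fix $b\notin A$ with $a^b=a^{-1}$. The strategy is to show that $A$ is the unique subgroup attaining the maximum Chermak--Delgado measure. Since $b$ inverts $A$ and only the elements of the central subgroup of order $2$ in $A$ equal their inverses, we get $\textbf{C}_G(A)=A$, so $m_G(A)=|A|^2=2^{2k-2}$. I then sort the remaining subgroups $H$ into four families:
\begin{enumerate}
\item proper subgroups of $A$;
\item cyclic subgroups of order $4$ not contained in $A$;
\item generalized quaternion subgroups $Q_{2^{j+1}}$ with $2\leq j\leq k-2$ not contained in $A$;
\item the whole group $G$.
\end{enumerate}
This covers every proper subgroup, using the standard fact that every element of $Q_{2^k}\setminus A$ squares to the central involution, so any $H\not\leq A$ is generated by $H\cap A$ (which necessarily contains the center) together with a single element outside $A$.

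For each family I would bound $|\textbf{C}_G(H)|$ and then $m_G(H)$. The core centralizer computation, using $a^b=a^{-1}$, gives $|\textbf{C}_G(a^i b)|=4$ and, more generally, $\textbf{C}_G(H)\leq A$ whenever $H$ contains an element of $A$ of order at least $4$. Combining these, one finds $m_G(H)\leq 2^{2k-3}$ for proper subgroups of $A$ of order at least $4$, $m_G(H)\leq 2^{k+1}$ in all other cases of families (1)--(3), and $m_G(G)=2^{k+1}$. All of these are strictly less than $2^{2k-2}$ for $k\geq 4$, so $A$ is the unique maximizer and $|\CD(Q_{2^k})|=1$.

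The main obstacle is the bookkeeping for subgroups not contained in $A$: one must identify that they are all either cyclic of order $4$ or generalized quaternion, and compute their centralizers carefully using the inversion relation. Once these centralizer computations are in place, the numerical comparisons against $2^{2k-2}$ are immediate.
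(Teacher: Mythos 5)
Your argument is correct, and it is worth noting that the paper itself gives no proof of this lemma: it is imported verbatim from Fasol\u{a} and T\u{a}rn\u{a}uceanu (the citation is to ``the proof of Lemma 2.1'' of \cite{FT}), so your direct computation is a self-contained substitute rather than a variant of an argument in this paper. The computation checks out. For $k=3$ the six subgroups of $Q_8$ have measures $8,16,16,16,16,16$, giving $|\CD(Q_8)|=5$. For $k\geq 4$ your decomposition is exhaustive: since $A=\langle a\rangle$ has index $2$, any $H\not\leq A$ has $H\cap A$ of index $2$ in $H$ and is generated by $H\cap A$ together with one element $a^ib$, which squares to the central involution and inverts $A$; hence $H$ is cyclic of order $4$ or generalized quaternion. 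The centralizer facts you invoke are right ($\textbf{C}_G(a^ib)=\langle a^ib\rangle$ of order $4$, and $\textbf{C}_G(H)\leq A$ once $H$ meets $A$ in an element of order at least $4$), and the resulting bounds $2^{2k-3}$ and $2^{k+1}$ are both strictly below $m_G(A)=2^{2k-2}$ exactly when $k\geq 4$ (they tie with $m_G(A)$ at $k=3$, which is consistent with the $Q_8$ case). If you write this up in full, the only step deserving an explicit sentence is the classification of the subgroups not contained in $A$, which you have correctly flagged as the main bookkeeping burden.
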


For the next lemma, we will need to introduce some notation. For a group $G$ and two subgroups $H$ and $K$ of $G$, we write $
[[H:K]]_G$ for the set of subgroups between $H$ and $K$, i.e.,
\[
[[H:K]]_G =  \{ J\leq G: H\leq J \text{ and } J \leq K \}.
\] If the group $G$ is clear from context, then we write $[[H:K]]$ for $[[H:K]]_G$. The following lemma from An \cite[Theorem 3.4]{An_22} and \cite[Theorem 4.4]{An_22_2} shows that when $H\in \CD(G)$, we can actually say something abut $\CD(H)$.

\begin{lemma}\label{lem: interval}
Let $G$ be a finite group and $H\leq G$. If $H\in\CD(G)$ then $\CD(H)$ is exactly the set of subgroups of $H$ containing $\textbf{Z}
(H)$ and in $\CD(G)$, i.e., \[\CD(H)=[[\textbf{Z}(H):H]] \cap \CD(G).\]
\end{lemma}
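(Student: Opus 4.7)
The plan is to prove the two inclusions of $\CD(H)=[[\textbf{Z}(H):H]] \cap \CD(G)$ separately, after first pinning down $m^{*}(H)$. For any $K \leq H$, the subgroup $K$ commutes with $\textbf{C}_G(H)$, so $K\textbf{C}_G(H)$ is a subgroup of $G$, and $\textbf{C}_H(K)$ centralizes it. Using $K \cap \textbf{C}_G(H) = K \cap \textbf{Z}(H)$, a direct calculation yields
\[
m_G(K\textbf{C}_G(H)) \;\geq\; \frac{|K|\,|\textbf{C}_G(H)|\,|\textbf{C}_H(K)|}{|K \cap \textbf{Z}(H)|}.
\]
Comparing with $m^{*}(G) = m_G(H) = |H|\,|\textbf{C}_G(H)|$ gives $m_H(K) \leq |H|\,|K \cap \textbf{Z}(H)| \leq |H|\,|\textbf{Z}(H)|$, with equality throughout only if $\textbf{Z}(H) \leq K$. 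Since $m_H(H) = |H|\,|\textbf{Z}(H)|$, we conclude that $m^{*}(H) = |H|\,|\textbf{Z}(H)|$ and that every member of $\CD(H)$ already lies in $[[\textbf{Z}(H):H]]$.

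For the inclusion $\CD(H) \subseteq \CD(G)$, let $K \in \CD(H)$. The identity $\textbf{C}_G(H) \cap \textbf{C}_H(K) = \textbf{Z}(H)$ gives $|\textbf{C}_G(K)| \geq |\textbf{C}_G(H)|\,|\textbf{C}_H(K)|/|\textbf{Z}(H)|$, and hence
\[
m_G(K) \;\geq\; \frac{|\textbf{C}_G(H)|\,m_H(K)}{|\textbf{Z}(H)|} \;=\; |H|\,|\textbf{C}_G(H)| \;=\; m^{*}(G),
\]
placing $K$ in $\CD(G)$.

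The reverse inclusion is the main obstacle, and it is here that the lattice and self-dual structure of $\CD(G)$ becomes indispensable. Given $K \in [[\textbf{Z}(H):H]] \cap \CD(G)$, closure of $\CD(G)$ under centralizers and joins places $\textbf{C}_G(K)H$ in $\CD(G)$, and self-duality supplies $\textbf{C}_G(\textbf{C}_G(K)) = K$. A short computation using $\textbf{Z}(H) \leq K \leq H$ shows $\textbf{C}_G(\textbf{C}_G(K)H) = K \cap \textbf{C}_G(H) = \textbf{Z}(H)$. Then $m_G(\textbf{C}_G(K)H) = m^{*}(G)$ together with the product formula $|\textbf{C}_G(K)H| = |\textbf{C}_G(K)|\,|H|/|\textbf{C}_H(K)|$ forces $|\textbf{C}_G(K)| = |\textbf{C}_G(H)|\,|\textbf{C}_H(K)|/|\textbf{Z}(H)|$. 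Substituting into $m_G(K) = m^{*}(G)$ yields $m_H(K) = |H|\,|\textbf{Z}(H)| = m^{*}(H)$, so $K \in \CD(H)$. The delicate point is that the elementary centralizer-product bound used in the forward direction produces an inequality the wrong way here; the self-duality identity $\textbf{C}_G(\textbf{C}_G(K)) = K$ for $K \in \CD(G)$ is precisely what closes the gap.
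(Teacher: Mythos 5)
Your proof is correct and complete. Note that the paper does not actually prove this lemma --- it is quoted from An (\cite{An_22}, \cite{An_22_2}) without argument --- so your write-up supplies a self-contained proof where the paper has only a citation. All three steps check out: the bound $m_G(K\textbf{C}_G(H))\geq |K|\,|\textbf{C}_G(H)|\,|\textbf{C}_H(K)|/|K\cap\textbf{Z}(H)|$ correctly pins down $m^{*}(H)=|H|\,|\textbf{Z}(H)|$ and forces $\textbf{Z}(H)\leq K$ for $K\in\CD(H)$; the identity $\textbf{C}_G(H)\cap\textbf{C}_H(K)=\textbf{Z}(H)$ gives the inclusion $\CD(H)\subseteq\CD(G)$; and the reverse inclusion via $\textbf{C}_G(K)H\in\CD(G)$, $\textbf{C}_G(\textbf{C}_G(K)H)=K\cap\textbf{C}_G(H)=\textbf{Z}(H)$, and the product formula is a clean way to convert $m_G(K)=m^{*}(G)$ into $m_H(K)=m^{*}(H)$. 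Your closing remark is apt: the naive centralizer-product bound only gives $m_G(K)\geq$ (something), which is useless for the reverse direction, and the duality $\textbf{C}_G(\textbf{C}_G(K))=K$ is what turns the inequality into an equality. The only presentational caveat is that the facts you invoke --- $\CD(G)$ is closed under centralizers, products of members are subgroups equal to joins, and $\textbf{C}_G(\textbf{C}_G(K))=K$ for $K\in\CD(G)$ --- are not all stated explicitly in this paper (only closure under products and intersections appears in the introduction), so you should cite Isaacs \cite[1.G]{FGT} for the double-centralizer property rather than treating it as given.
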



\section{Proof of Theorem \ref{thm: nil}.}\label{sec: nil}
Our proof of Theorem \ref{thm: nil} depends on a connection between the Sylow subgroups and the Chermak--Delgado lattice. One of the Sylow Theorems states that the total number of Sylow $p$-subgroups in $G$ is 1 if and only if any of the Sylow $p$-subgroups is normal in $G$. The below lemma extends this observation.
\begin{lemma}\label{lem: subnormal-normal}
Let $G$ be a finite group and let $S$ be a Sylow subgroup of $G$. Then $S$ is subnormal in $G$ if and only if $S$ is normal in $G$. 
\end{lemma}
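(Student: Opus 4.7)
The forward direction is immediate, since a normal subgroup is subnormal with a chain of length one. So the real content is the converse: if $S$ is a subnormal Sylow $p$-subgroup of $G$, then $S \lhd G$. My plan is to induct on the length $n$ of a subnormal chain
\[ S = K_0 \lhd K_1 \lhd \cdots \lhd K_n = G. \]

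The key structural observation I would exploit is that $S$ is a Sylow $p$-subgroup of every intermediate group $K_i$. Indeed, since $S \leq K_i \leq G$, the largest power of $p$ dividing $|K_i|$ lies between $|S|$ and $|S|$ (because $|S|$ divides $|K_i|$ and $|K_i|$ divides $|G|$, whose $p$-part is $|S|$). Hence $|S|$ is the full $p$-part of $|K_i|$, so $S \in \mathrm{Syl}_p(K_i)$.

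From here the induction runs as follows. The base case $n \leq 1$ is trivial. For the inductive step, apply the hypothesis to the chain $S \lhd K_1 \lhd \cdots \lhd K_{n-1}$ inside $K_{n-1}$: we get $S \lhd K_{n-1}$. Now $S$ is a normal Sylow $p$-subgroup of $K_{n-1}$, so by the usual Sylow uniqueness argument $S$ is the unique Sylow $p$-subgroup of $K_{n-1}$, and therefore $S$ is characteristic in $K_{n-1}$. Since $K_{n-1} \lhd K_n = G$ and characteristic subgroups of normal subgroups are normal in the ambient group, we conclude $S \lhd G$.

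There is no real obstacle here; the proof is essentially a standard application of the fact that a normal Sylow subgroup is characteristic, combined with the transitivity of the characteristic-in-normal-implies-normal principle along the subnormal chain. The only point worth flagging is the preliminary lemma that $S$ remains Sylow in each $K_i$, which is what allows us to invoke the uniqueness of a normal Sylow subgroup at every step of the induction.
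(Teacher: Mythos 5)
Your proof is correct and follows essentially the same route as the paper: both rest on the observation that $S$ remains a Sylow subgroup of each $K_i$, so a normal Sylow subgroup is unique, hence characteristic, and then characteristic-in-normal-implies-normal pushes normality up the chain. The only cosmetic difference is that you package the iteration as an induction on the chain length from the top, whereas the paper iterates upward from $K_1$; the content is identical.
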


\begin{proof}
A normal subgroup is also subnormal. Suppose now that $S$ is subnormal in $G$, and that $S$ is a $p$-group for a prime $p$. This means there is a chain of subgroups 
\[ S = K_0 \lhd\, K_1 \lhd\, \dots \lhd\, K_n = G.\] We note that $S$ is a Sylow subgroup of all of the $K_i$. Since $S\lhd\, K_1$, we know that $S$ is the unique Sylow $p$-subgroup of $K_1$. Hence $S$ is a characteristic subgroup of $K_1$. Thus $S$ is normal in $K_2$ and by the same argument characteristic in $K_2$. Continuing in this manner, we see that $S$ is normal in $G$. 
\end{proof}

\begin{lemma}\label{lem: at_most_1}
Let $G$ be a finite group. Then at most one Sylow subgroup of $G$ is in $\CD(G)$.  Furthermore, if a Sylow subgroup of $G$ is in $\CD(G)$, then it is normal.
\end{lemma}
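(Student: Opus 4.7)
The plan is to combine two observations from the preliminaries: (i) every subgroup in $\CD(G)$ is subnormal in $G$ (noted in the paragraph preceding Lemma \ref{lem: order_p_witnesses}), and (ii) by Lemma \ref{lem: subnormal-normal}, a subnormal Sylow subgroup is normal. This will immediately yield the ``furthermore'' clause, and then a quick lattice-intersection argument will force uniqueness.

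First I would dispose of the second assertion. Let $S$ be any Sylow subgroup of $G$ with $S \in \CD(G)$. Since members of the Chermak--Delgado lattice are subnormal, $S$ is subnormal in $G$; Lemma \ref{lem: subnormal-normal} then gives that $S$ is normal in $G$. This also gives that $S$ is the unique Sylow $p$-subgroup of $G$ for its prime $p$.

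Next I would establish that at most one Sylow subgroup lies in $\CD(G)$. Suppose, for contradiction, that $S_1$ and $S_2$ are two distinct Sylow subgroups of $G$, both contained in $\CD(G)$. By what we just showed, both are normal in $G$. Since a normal Sylow $p$-subgroup is the only Sylow $p$-subgroup, the primes associated to $S_1$ and $S_2$ must be different, say $p$ and $q$. Then $|S_1|$ and $|S_2|$ are coprime, so $S_1 \cap S_2 = 1$. Because $\CD(G)$ is closed under intersection, $1 \in \CD(G)$. But $S_1$ is a nontrivial $p$-group in $\CD(G)$, contradicting Corollary \ref{cor: 1_not_in_p}.

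The only obstacle worth noting is bookkeeping around the trivial case: if $G = 1$ then $\CD(G) = \{1\}$ and the statement is vacuous, so we may assume $G$ is nontrivial, which ensures that any Sylow subgroup under consideration (for a prime dividing $|G|$) is genuinely nontrivial and Corollary \ref{cor: 1_not_in_p} applies. No further computation is needed.
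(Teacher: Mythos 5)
Your proof is correct, and for the uniqueness half it takes a genuinely different route from the paper. The ``furthermore'' clause is handled identically in both: members of $\CD(G)$ are subnormal, and Lemma \ref{lem: subnormal-normal} upgrades a subnormal Sylow subgroup to a normal one. For uniqueness, the paper argues by cases: if $1 \in \CD(G)$ it invokes Lemma \ref{lem: no_p_groups} directly, and if a nontrivial Sylow $p$-subgroup $S$ with $|S| = p^k$ lies in $\CD(G)$ it runs a divisibility computation --- $p^{k+1}$ divides $m^*(G) = |S| \cdot |\textbf{C}_G(S)|$ because $\textbf{Z}(S)$ is nontrivial, while $p^{k+1}$ cannot divide $|T| \cdot |\textbf{C}_G(T)|$ for a Sylow subgroup $T$ belonging to a different prime. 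You instead use normality of both hypothetical members to force them onto different primes, intersect them to place $1$ in $\CD(G)$, and contradict Corollary \ref{cor: 1_not_in_p}. Your version is shorter and stays entirely inside the lattice formalism (closure under meets plus the corollary), whereas the paper's measure computation avoids any appeal to the intersection property in the nontrivial case. One bookkeeping remark: the paper explicitly counts the trivial subgroup as a Sylow subgroup for primes not dividing $|G|$, while you restrict attention to primes dividing $|G|$; under the paper's convention your contradiction still goes through, since of two distinct Sylow subgroups at least one is nontrivial and that one supplies the nontrivial $p$-group needed to invoke Corollary \ref{cor: 1_not_in_p}. So this is a cosmetic difference, not a gap.
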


\begin{proof}
Since all subgroups in $\CD(G)$ are subnormal, we have by Lemma \ref{lem: subnormal-normal} that if a Sylow subgroup of $G$ is in $\CD(G)$, then it is normal. 

First suppose that $1\in \CD(G)$.  Now $1$ is a Sylow subgroup of $G$ for any prime not dividing $|G|$.  It follows from Lemma \ref{lem: no_p_groups} that no nontrivial Sylow subgroups of $G$ are in $\CD(G)$, and so in this case, $1$ is the unique Sylow subgroup in $\CD(G)$.

Suppose $p$ is a divisor of $|G|$ and suppose a Sylow $p$-subgroup $S$ of $G$ is in $\CD(G)$. Write $|S| = p^k$. Since $S$ is normal in $G$, we have that $S$ is the unique Sylow $p$-subgroup of $G$.  Let $T$ be another Sylow subgroup of $G$, and so $T$ is not a $p$-group. We show that $T$ is not in $\CD(G)$.

By definition, $S\in \CD(G)$ means that $m^{*}(G) = |S| \cdot |\textbf{C}_G(S)|$. Since $1 < \textbf{Z}(S) \leq \textbf{C}_G(S)$, we conclude that $p^{k+1} | m^{*}(G)$. Since $S$ is a Sylow $p$-subgroup of $G$ with $|S| = p^k$, we know that $p^{k+1}$ does not divide $|\textbf{C}_G(T)|$. And so $p^{k+1}$ does not divide $|T|\cdot |\textbf{C}_G(T)|$ and we conclude that $T$ is not in $\CD(G)$.
\end{proof}

\begin{corollary}\label{cor: sylow_count}
Let $G$ be a finite group and write $n$ for the number of nontrivial Sylow subgroups of $G$. Then $\delta_{\CD}(G) \geq n$. 
\end{corollary}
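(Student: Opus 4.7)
The plan is to argue by cases on whether the trivial subgroup $1$ belongs to $\CD(G)$, relying almost entirely on Lemma \ref{lem: at_most_1} and Lemma \ref{lem: no_p_groups}. In both cases we will produce a set of $n$ distinct subgroups of $G$ that witness $\delta_{\CD}(G) \geq n$. Here $n$ counts the nontrivial Sylow subgroups of $G$, so every member of this count is a nontrivial $p$-group for some prime $p \mid |G|$.

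First I would handle the case $1 \in \CD(G)$. By Lemma \ref{lem: no_p_groups}, no nontrivial $p$-group can lie in $\CD(G)$, and in particular no nontrivial Sylow subgroup of $G$ lies in $\CD(G)$. Hence all $n$ nontrivial Sylow subgroups are themselves witnesses, giving $\delta_{\CD}(G) \geq n$ immediately.

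Next I would handle the case $1 \notin \CD(G)$. Here $1$ is itself a witness. By Lemma \ref{lem: at_most_1}, at most one Sylow subgroup of $G$ lies in $\CD(G)$, so at least $n-1$ of the $n$ nontrivial Sylow subgroups fail to lie in $\CD(G)$. Since $1$ is distinct from every nontrivial Sylow subgroup, taking $1$ together with these $n-1$ nontrivial Sylow witnesses yields a set of at least $n$ distinct subgroups of $G$ not in $\CD(G)$, so again $\delta_{\CD}(G) \geq n$.

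There is no real obstacle: the corollary is a clean bookkeeping consequence of the previous two lemmas, and the only subtlety worth flagging is ensuring that the trivial subgroup is not double-counted as a ``Sylow subgroup'' in the case split, which is handled by the word \emph{nontrivial} in the statement.
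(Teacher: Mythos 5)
Your proof is correct and uses essentially the same argument as the paper: both rely on Lemma \ref{lem: at_most_1} and Lemma \ref{lem: no_p_groups} (equivalently Corollary \ref{cor: 1_not_in_p}) to produce $n$ witnesses, with the only difference being that you split cases on whether $1 \in \CD(G)$ while the paper splits on whether some nontrivial Sylow subgroup is in $\CD(G)$ — a purely cosmetic reorganization.
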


\begin{proof}
If none of the nontrivial Sylow subgroups of $G$ are in $\CD(G)$, then the nontrivial Sylow subgroups of $G$ witness that $\delta_{\CD(G)} \geq n.$

Otherwise, suppose that a nontrivial Sylow subgroup $S$ of $G$ is in $\CD(G)$.  By Lemma \ref{lem: at_most_1} and Corollary \ref{cor: 1_not_in_p}, \[\{T:T \neq S \text{ is a nontrivial Sylow subgroup of $G$}\} \cup \{1\}\] witnesses that $\delta_{\CD(G)} \geq n.$
\end{proof}


Another fact of Sylow theory is that the number of Sylow $p$-subgroup of a group $G$ is congruent to $1$ modulo $p$, and this number divides the index in $G$ of a Sylow $p$-subgroup. Hence if a finite group $G$ is not nilpotent, then it has at least four Sylow subgroups with equality if and only if $G$ is a $\{2,3\}$-group and the Sylow $3$-subgroup is normal. 



\begin{proof}[Prove of Theorem \ref{thm: nil}]
The result for $\delta_{\CD}(G) < 4$ follows from Corollary \ref{cor: sylow_count} and the fact that non-nilpotent groups always have at least four Sylow subgroups. Suppose $\delta_{\CD}(G) = 4$.  Let $S$ be the Sylow $3$-subgroup of $G$.

We first argue that $S \in \CD(G)$.  Suppose by way of contradiction that $S\notin \CD(G)$.  Then the four subgroups that are not in $\CD(G)$ are the three Sylow $2$-subgroups of $G$ and $S$. We conclude that $1 \in \CD(G)$.  It follows from Lemma \ref{lem: no_p_groups} that all of the nontrivial $2$-groups and $3$-groups of $G$ are not in $\CD(G)$.  Since $\delta_{\CD}(G) = 4$, $G$ has exactly four total nontrivial $2$-groups or $3$-groups, namely the three Sylow $2$-subgroups of $G$ and $S$.  So all nontrivial Sylow subgroups must have prime order.  Thus, $m^*(G) = m_G(1)=|G|=6$. But $S$ has a Chermak-Delgado measure of $9$, a contradiction.

Therefore $S \in \CD(G)$, and the four subgroups that are not in $\CD(G)$ are the three Sylow $2$-subgroups of $G$ and the identity subgroup. We conclude that $G \in \CD(G)$.

Note that any Sylow $2$-subgroup of $\textbf{C}_G(S)$ is also a Sylow $2$-subgroup of $G$.  This is because $|S| \cdot |\textbf{C}_G(S)| = m^*(G)$ and since $G \in \CD(G)$, $|G|$ divides $m^*(G)$.  Now we have that $G=ST$ is a direct product where $T$ is a Sylow $2$-subgroup of $\textbf{C}_G(S)$, which implies that $G$ is nilpotent, a contradiction.
\end{proof}

We can say more about the non-nilpotent case, as seen in the next section where we prove Theorem \ref{thm: s3}.

\section{Proof of Theorem \ref{thm: s3}.}\label{sec: s3}

If a finite group $G$ is not nilpotent, then it has at least four Sylow subgroups, and if $G$ has either four or five Sylow subgroups, then $G$ is a $\{2,3\}$-group. In this section we will sometimes use the non-standard notation $H_p$ to denote a Sylow $p$-subgroup of $G$. This makes our proofs easier to read by reminding the reader of the prime associated with the Sylow $p$-subgroup, especially as the prime $p$ will shift between certain lemmata. In addition, we wish to avoid confusion with $S_p$, the symmetric group on $p$ symbols. 

\begin{lemma}\label{lem: 1_2_4_3}
Suppose that a finite $\{2,3\}$-group $G$ has exactly four Sylow $3$-subgroups and a normal Sylow $2$-subgroup $H_2$. Then $\delta_{\CD}(G) > 5$.
\end{lemma}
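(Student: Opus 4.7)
My plan is to assume $\delta_{\CD}(G) = 5$ and exhibit a sixth subgroup outside $\CD(G)$. Corollary \ref{cor: sylow_count} already gives $\delta_{\CD}(G) \geq 5$ from the five nontrivial Sylow subgroups of $G$ ($H_2$ and the four Sylow $3$-subgroups), and Lemma \ref{lem: at_most_1} forbids any non-normal Sylow subgroup---in particular any Sylow $3$-subgroup---from lying in $\CD(G)$. Thus the five alleged witnesses must be the four Sylow $3$-subgroups together with either $\{1\}$ or $H_2$. I also note that $|H_2| \geq 4$, since $n_3 = 4$ must divide $[G : H_3] = |H_2|$ for any Sylow $3$-subgroup $H_3$.

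The case $H_2 \notin \CD(G)$ (so $\{1\} \in \CD(G)$) is quickly disposed of by Lemma \ref{lem: no_p_groups}: that lemma removes every nontrivial $p$-group from $\CD(G)$, and any proper nontrivial subgroup of $H_2$ (which exists since $|H_2| \geq 4$) is a nontrivial $2$-group that is neither $\{1\}$ nor a Sylow subgroup of $G$, and hence is a sixth witness.

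The case $H_2 \in \CD(G)$ is the substantive one. Set $m^{*} = m_G(H_2) = |H_2| \cdot |C_G(H_2)|$. First I would check that the $2$-part of $m^{*}$ equals $|H_2| \cdot |Z(H_2)|$: indeed $C_G(H_2) \cap H_2 = Z(H_2)$, and $C_G(H_2)/Z(H_2)$ embeds into $G/H_2$, which is a $3$-group. The candidate sixth witness is then $K := Z(H_2) H_3$ for a Sylow $3$-subgroup $H_3$: since $Z(H_2)$ is characteristic in $H_2 \lhd G$, it is normal in $G$, so $K$ is a subgroup of order $|Z(H_2)| \cdot |H_3|$; being divisible by both $2$ and $3$, $K$ differs from $\{1\}$ and from every Sylow $3$-subgroup. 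Our assumption then forces $K \in \CD(G)$, i.e., $m_G(K) = m^{*}$. Matching $2$-parts yields that $|H_2|$ divides $|C_G(K)|$, so $H_2 \leq C_G(K)$; in particular $H_2$ centralizes $H_3$, giving $G = H_2 \times H_3$, which is nilpotent and contradicts $n_3 = 4$.

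I expect the main obstacle to be identifying the sixth witness $K$ in this last case---no pre-existing lemma supplies it. The choice $K = Z(H_2) H_3$ is motivated by wanting the smallest subgroup involving both primes $2$ and $3$, whose Chermak--Delgado measure is tightly constrained by prime-factorization comparisons against $m^{*}$. The cleanness comes from the fact that a single matching of $2$-parts is already strong enough to force $H_2$ and $H_3$ to commute, collapsing $G$ to a direct product.
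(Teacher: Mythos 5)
Your proof is correct, and its skeleton coincides with the paper's: both arguments identify the five excluded subgroups as the four Sylow $3$-subgroups together with exactly one of $1$ or $H_2$, and both dispose of the case $1 \in \CD(G)$ by noting that $|H_2| \geq 4$ forces a sixth nontrivial $2$-subgroup outside $\CD(G)$ via Lemma \ref{lem: no_p_groups}. The only divergence is in the case $H_2 \in \CD(G)$: the paper observes that $G$ itself is not among the five witnesses, hence $G \in \CD(G)$ and $|G|$ divides $m^* = |H_2|\cdot|\textbf{C}_G(H_2)|$, so $\textbf{C}_G(H_2)$ contains a full Sylow $3$-subgroup and $G = H_2 \times H_3$ is nilpotent; you instead manufacture the subgroup $K = \textbf{Z}(H_2)H_3$, force $K \in \CD(G)$, and compare the $2$-parts of $m_G(K)$ and $m_G(H_2)$ to conclude $H_2 \leq \textbf{C}_G(K)$. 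Both closings are valid and reach the same direct-product contradiction; the paper's is slightly more economical, since $G$ is the most readily available subgroup that must lie in $\CD(G)$, but your $2$-part bookkeeping (using $\textbf{C}_G(H_2) \cap H_2 = \textbf{Z}(H_2)$ and the embedding of $\textbf{C}_G(H_2)/\textbf{Z}(H_2)$ into the $3$-group $G/H_2$) is airtight, and your choice of $K$ correctly avoids all five alleged witnesses because its order is divisible by both $2$ and $3$.
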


\begin{proof}
Since $G$ has four Sylow $3$-subgroups, it is not nilpotent. By Theorem \ref{thm: nil}, we conclude that $\delta_{\CD}(G) \geq 5$.
The four Sylow $3$-subgroups of $G$ witness that $\delta_{\CD}(G) \geq 4$. By way of contradiction suppose that $\delta_{\CD}(G) = 5$.  

Note that $4$ divides $|H_2|$ which equals the index in $G$ of a Sylow $3$-subgroup.

Suppose $1\in \CD(G)$.  It follows from Lemma \ref{lem: no_p_groups} that all of the nontrivial $2$-groups and $3$-groups of $G$ are not in $\CD(G)$.  Since $\delta_{\CD}(G) = 5$, $G$ has exactly five total nontrivial $2$-groups or $3$-groups, namely the four Sylow $3$-groups and the normal Sylow $2$-group.  We conclude that all of the nontrivial Sylow subgroups have prime order which contradicts $|H_2| \geq 4$.

Now suppose that $1\notin \CD(G)$. By assumption, $1$ and the four Sylow $3$-subgroups are the only subgroups of $G$ not contained in $\CD(G)$. Hence $G$ and $H_2$ are in $\CD(G)$. This means that $m^*(G) = |G| \cdot |\textbf{Z}(G)|$ is divisible by $|G|$. However, $m^{*}(G) = |H_2|\cdot |\textbf{C}_G(H_2)|$. Since $H_2$ is a $2$-group, we conclude that $[H_2,H_3]=1$ for a Sylow $3$-subgroup $H_3$ of $G$. Then $H_2H_3 =G$ is a direct product and is thus nilpotent, a contradiction. Hence $\delta_{\CD}(G) > 5.$
\end{proof}

\begin{lemma}\label{lem:3_2_1_3}
Suppose that a finite $\{2,3\}$-group $G$ has exactly three Sylow $2$-subgroups and a normal Sylow $3$-subgroup $H_3$. If $\delta_{\CD}(G) = 5$, then $1\notin \CD(G)$.
\end{lemma}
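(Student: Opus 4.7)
My plan is to argue by contradiction: suppose $1 \in \CD(G)$. This assumption has two strong consequences. First, $m^{*}(G) = m_G(1) = |G|$. Second, Lemma~\ref{lem: no_p_groups} guarantees that every nontrivial $p$-subgroup of $G$ (for any prime $p$) lies outside $\CD(G)$. Since the three Sylow $2$-subgroups together with $H_3$ already give four distinct nontrivial $p$-subgroups of $G$, and $\delta_{\CD}(G) = 5$, the total number $n$ of nontrivial $p$-subgroups of $G$ must satisfy $4 \leq n \leq 5$.

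I would then split into cases. If $n = 4$, each Sylow subgroup must be of prime order, so $|G| = 6$ and $G \cong S_3$; a direct Chermak--Delgado computation in $S_3$ gives $m_G(H_3) = 9 > 6$, contradicting $m^{*}(G) = |G|$. If $n = 5$, there is exactly one extra nontrivial $p$-subgroup, and I would analyze whether it is a $2$-subgroup or a $3$-subgroup. A subgroup-counting argument in the $2$-subgroup case rules out $|H_2| \geq 8$ and $H_2 \cong V_4$ (in both situations, even maximal sharing across three Sylow $2$-subgroups produces more than one extra proper $2$-subgroup), leaving only $H_2 \cong C_4$ with a single common central $C_2$; combined with $|H_3| = 3$ this pins $G$ down to the dicyclic group $Q_{12}$, which fails $m^{*}(G) = |G|$ via its cyclic subgroup of order $6$ (whose Chermak--Delgado measure is $36$). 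In the $3$-subgroup case, counting forces $H_3 \cong C_9$ and $|H_2| = 2$, so $|G| = 18$; but among groups of order $18$ with cyclic Sylow $3$-subgroup only $C_{18}$ and $D_{18}$ occur, and they have $1$ and $9$ Sylow $2$-subgroups respectively, never $3$.

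The hard step will be the subgroup counting in the subcase where $n = 5$ and the extra $p$-subgroup is a $2$-group. The key point is that three conjugate Sylow $2$-subgroups will typically contribute many distinct proper subgroups unless the relevant subgroup lies in the $2$-core, so the tight bound $n = 5$ funnels us to $G \cong Q_{12}$ as the only candidate; once there, a direct Chermak--Delgado computation closes the case and completes the proof.
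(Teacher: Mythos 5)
Your proposal is correct and follows essentially the same route as the paper: assume $1\in\CD(G)$, so $m^{*}(G)=|G|$ and (by Lemma~\ref{lem: no_p_groups}) every nontrivial $p$-subgroup lies outside $\CD(G)$, then play the count of nontrivial $p$-subgroups off against the budget of $5$ and derive a contradiction from a subgroup whose Chermak--Delgado measure exceeds $|G|$. The only difference is one of granularity: the paper stops at the bound $|G|\leq 2^2\cdot 3^2$ and observes that in each of the four possible orders an abelian Sylow subgroup $S$ already has $m_G(S)\geq |S|^2>|G|$, whereas you push the subgroup counting further to pin down the candidate groups ($S_3$, $Q_{12}$, and the empty order-$18$ case) explicitly before computing.
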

\begin{proof}
Suppose by way of contradiction that $1\in \CD(G)$. It follows from Lemma \ref{lem: no_p_groups} that all of the nontrivial $2$-groups and $3$-groups of $G$ are not in $\CD(G)$.  Since $\delta_{\CD}(G) = 5$, $G$ has at most five nontrivial $2$-groups or $3$-groups, including the three Sylow $2$-subgroups and $H_3$.  And so $G$ has at most two subgroups that are nontrivial $3$-groups, and at most one subgroup that is a nontrivial $2$-group, but not Sylow.  This means that $G$ is a $\{2,3\}$-group with $|G| \leq 2^2 3^2$.  Considering each of the possible orders of $G$, $2 \cdot 3$, $4 \cdot 3$, $2 \cdot 9$, or $4 \cdot 9$, we see that in every case, the Sylow subgroups are abelian and one of them has Chermak--Delgado measure larger than $m^*(G) = |G|$.
\end{proof}

\begin{lemma}\label{lem: H_3inCD}
Suppose that a finite $\{2,3\}$-group $G$ has exactly three Sylow $2$-subgroups and a normal Sylow $3$-subgroup $H_3$. If $\delta_{\CD}(G) = 5$, then $H_3 \in \CD(G)$.\end{lemma}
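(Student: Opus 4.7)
My plan is to argue by contradiction. Suppose $H_3 \notin \CD(G)$. Since the three Sylow $2$-subgroups form a conjugacy class and $\CD(G)$ is conjugation-invariant, Lemma \ref{lem: at_most_1} forces all three of them out of $\CD(G)$; combined with Lemma \ref{lem:3_2_1_3} ($1 \notin \CD(G)$), the five subgroups not in $\CD(G)$ must be exactly $\{1, H_2^{(1)}, H_2^{(2)}, H_2^{(3)}, H_3\}$. In particular $G \in \CD(G)$, so $m^{*}(G) = |G|\cdot|\textbf{Z}(G)|$, and $\textbf{Z}(G) \neq 1$ (otherwise $m_G(1) = |G| = m^{*}(G)$ would put $1 \in \CD(G)$).

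The next step uses Lemma \ref{lem: contain_center}: every nontrivial proper subgroup of $H_3$ or of a Sylow $2$-subgroup is in $\CD(G)$ and therefore contains $\textbf{Z}(G)$. I then case split on the pair $(|H_3|,|H_2|)$. If $|H_3| > 3$ and $|H_2| > 2$, then $\textbf{Z}(G)$ is both a $2$- and a $3$-subgroup, so $\textbf{Z}(G) = 1$, a contradiction. If $|H_3| = 3$ and $|H_2| = 2$, then $|G| = 6$ forces $G \cong S_3$, but a direct check gives $A_3 \in \CD(S_3)$, again a contradiction. If $|H_3| > 3$ and $|H_2| = 2$, I argue that $\textbf{Z}(G)$ sits inside the intersection of all order-$3$ subgroups of $H_3$; this intersection is trivial unless $H_3$ has a unique subgroup of order $3$, so $H_3$ is cyclic. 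Nonnilpotence forces $H_2 = C_2$ to act by inversion (the only involution of $\mathrm{Aut}(C_{3^k})$), which acts nontrivially on the unique order-$3$ subgroup and hence prevents $\textbf{Z}(G)$ from containing it; combined with $\textbf{Z}(G) \leq C_3$, this forces $\textbf{Z}(G) = 1$, a contradiction.

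The remaining case $|H_3| = 3$, $|H_2| > 2$ is the main obstacle. The same intersection argument shows $H_2$ has a unique subgroup of order $2$, hence is cyclic or generalized quaternion. If $H_2 \cong C_{2^k}$, I would check that $\textbf{Z}(G) = \textbf{C}_{H_2}(H_3)$ has order $2^{k-1}$ while $\textbf{Z}(G) \leq C_2$, forcing $k = 2$ and $G \cong Q_{12}$; but the subgroup $C_6 \leq Q_{12}$ has Chermak--Delgado measure $36 > 24 = |G|\cdot|\textbf{Z}(G)|$, contradicting $G \in \CD(G)$. If $H_2 \cong Q_{2^k}$ for $k \geq 3$, then $\textbf{Z}(H_2) \leq [H_2,H_2] \leq K$ (with $K$ the index-$2$ kernel of the action of $H_2$ on $H_3$) yields $\textbf{Z}(G) = \textbf{Z}(H_2)$, of order $2$. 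I would then compute that $L := \textbf{C}_G(H_3) = H_3 \times K$ satisfies $\textbf{C}_G(L) = H_3 \times \textbf{Z}(K)$, giving $m_G(L) = 9 \cdot 2^{k-1} \cdot |\textbf{Z}(K)|$; whether $K$ is cyclic (so $|\textbf{Z}(K)| = 2^{k-1}$) or quaternion (so $|\textbf{Z}(K)| = 2$), a direct comparison shows $m_G(L) > 3 \cdot 2^{k+1} = |G|\cdot|\textbf{Z}(G)|$, so $G$ fails to achieve the maximum Chermak--Delgado measure, contradicting $G \in \CD(G)$.
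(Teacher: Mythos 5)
Your proof is correct, and it shares its opening skeleton with the paper's: both identify the five excluded subgroups as $1$, the three Sylow $2$-subgroups, and $H_3$, conclude $G \in \CD(G)$ with $m^{*}(G) = |G|\cdot|\textbf{Z}(G)|$ and $\textbf{Z}(G) \neq 1$, and both establish (your Case 1, the paper's $P \cap Q$ argument) that one of the two Sylow subgroups must have prime order. Where you diverge is in how the contradiction is extracted from that point. The paper finishes in three lines with a uniform divisibility argument: letting $K$ be the prime-order Sylow subgroup, $\textbf{Z}(G)$ is a $p'$-group, so the $p$-part of $m^{*}(G) = |G|\cdot|\textbf{Z}(G)|$ is exactly $p$; yet $K\textbf{Z}(G)$ is an abelian member of $\CD(G)$, so $p^2$ divides $m_G(K\textbf{Z}(G)) = m^{*}(G)$. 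You instead classify the structures that could survive --- forcing the relevant Sylow subgroup to be cyclic or generalized quaternion via the ``$\textbf{Z}(G)$ lies in every prime-order subgroup'' observation, and then computing explicit measures for $Q_{12}$ and for $\textbf{C}_G(H_3) = H_3 \times K$ in the quaternion case. Each of your case computations checks out (in particular the $m_G(L) = 9\cdot 2^{k-1}|\textbf{Z}(K)| > 3\cdot 2^{k+1}$ comparison holds whether $K$ is cyclic or quaternion), but note that every one of your Cases 3, 4a, 4b is really exhibiting a subgroup of $\CD(G)$ whose measure is divisible by $p^2$ while $m^{*}(G)$ is not --- which is exactly the fact the paper isolates once and for all. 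The payoff of your route is concrete structural information (e.g., that the hypothetical counterexamples would have to be $Q_{12}$ or $C_{3^k}\rtimes C_2$); the cost is length and a case analysis that the uniform argument renders unnecessary.
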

\begin{proof}
By Lemma \ref{lem:3_2_1_3}, we know that $1\notin \CD(G)$. Hence $1$, together with the three Sylow $2$-subgroups witness that $\delta_{\CD}(G) \geq 4$. There is one other subgroup of $G$ not in $\CD(G)$. 

Suppose by way of contradiction that $H_3 \notin \CD(G)$. Let $H_2$ be a Sylow $2$-subgroup.  We claim that one of $H_3$ or $H_2$ must have prime order. Otherwise, if $1 < P < H_3$, and $1 < Q < H_2$, then $P,Q \in \CD(G)$, and so $1 = P \cap Q \in \CD(G)$, a contradiction. Let $K$ be a Sylow subgroup of prime order $p$. 

Then $K \cap \textbf{Z}(G) = 1$, as otherwise, $K \leq \textbf{Z}(G)$, and it would follow that the $\{2,3\}$-group $G$ is nilpotent. And so $\textbf{Z}(G)$ is a $p'$-group.  Now $m^*(G) = |G| \cdot |\textbf{Z}(G)| = p \cdot (p')^i$ for some $i$.  But also $K < K\textbf{Z}(G) < G$, and so $K\textbf{Z}(G) \in \CD(G)$.  But $K\textbf{Z}(G)$ is abelian, and so $K\textbf{Z}(G) \leq \textbf{C}_G(K\textbf{Z}(G))$, and so $p^2$ divides $|K\textbf{Z}(G)| \cdot |\textbf{C}_G(K\textbf{Z}(G))| = m^*(G)$, a contradiction.

\end{proof}

We can now prove Theorem \ref{thm: s3}, which states that for a non-nilpotent group if $\delta_{\CD}(G) = 5,$ then  $ G \cong S_3$. 

\begin{proof}[Proof of Theorem \ref{thm: s3}]
From Lemma \ref{lem: 1_2_4_3} we conclude that $G$ is a $\{2,3\}$ group with exactly three Sylow $2$-subgroups, and a normal Sylow $3$-subgroup $H_3$. From Lemmata \ref{lem:3_2_1_3} and \ref{lem: H_3inCD} we have that $H_3 \in \CD(G)$ and that the five subgroups of $G$ not in $\CD(G)$ are the three Sylow $2$-subgroups, the identity subgroup, and one other subgroup $X$. We will show that $X=G$. Suppose instead that $G\in \CD(G)$. 

Note that any Sylow $2$-subgroup of $\textbf{C}_G(H_3)$ is also a Sylow $2$-subgroup of $G$.  This is because $|H_3| \cdot |\textbf{C}_G(H_3)| = m^*(G)$ and since $G \in \CD(G)$, $|G|$ divides $m^*(G)$.  And so now we have that $G=H_3 H_2$ is a direct product where $H_2$ is a Sylow $2$-subgroup of $\textbf{C}_G(H_3)$, which implies that $G$ is nilpotent, a contradiction.

So $X=G$, and the three Sylow $2$-subgroups, the identity subgroup, and $G$ witness that $\delta_{\CD}(G) = 5$.

Let $H_2$ be a Sylow $2$-subgroup of $G$.  We argue that $|H_2|=2$.  Otherwise, there exists $1 < X \leq \textbf{Z}(H_2)$ with $X < H_2$, and so $X \in \CD(G)$. Then $m^*(G) = |X| \cdot |\textbf{C}_G(X)| = |H_3| \cdot |\textbf{C}_G(H_3)|$, and so $|H_3|$ divides $|\textbf{C}_G(X)|$. This implies that $H_3 \leq \textbf{C}_G(X)$. Hence $[X,H_2]=1$ and $[X,H_3]=1$. Since $\textbf{Z}(G) \leq H_3$, we have $X \leq \textbf{Z}(G) \leq H_3$, a contradiction. Hence $|H_2|=2$.  Since $|H_2|=2$, we note that $\textbf{C}_G(H_3)$ is a $3$-group as otherwise a Sylow $2$-subgroup would be contained in $\textbf{C}_G(H_3)$ which would imply that $G$ is nilpotent.  So $m^*(G) = |H_3| \cdot |\textbf{C}_G(H_3)|$ is a power of $3$.  

We now argue that $H_2$ is self-normalizing.  Otherwise, let $Y$ be a nontrivial Sylow $3$-subgroup of $\textbf{N}_G(H_2)$.  So $1 < Y < H_3$ (note that $Y < H_3$ as otherwise $H_3$ and $H_2$ would normalize one another, and so $G=H_3H_2$ would be nilpotent).  Then $H_2Y$ is a subgroup, and $H_2 < H_2Y < G$, and so $H_2Y$ is in $\CD(G)$, which contradicts the fact that $m^*(G)$ is a power of $3$.

So $H_2$ is self-normalizing, and $3 = |G:\textbf{N}_G(H_2)| = |G:H_2| = |H_3|$.  So $H_3$ is cyclic of order $3$, $H_2$ is cyclic of order $2$, and $G \cong S_3$.
\end{proof}

Figure \ref{fig: S3} shows the subgroup diagram of $S_3$. 

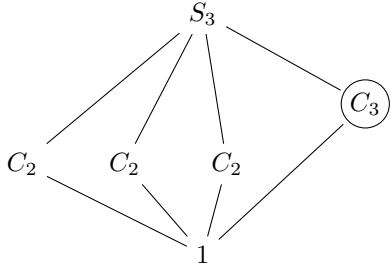
\begin{figure} 
\centering
\begin{tikzpicture}[scale=.8]
  \node (one) at (0,2) {$S_3$};
  \node (a) at (-3,-.5) {$C_2$};
  \node (b) at (-1.3,-.5) {$C_2$};
  \node (c) at (.4,-.5) {$C_2$};
  \node (d) at (2.7,.5) {$C_3$};
  \node (zero) at (0,-2) {$1$};
  \draw (zero) -- (a) -- (one) -- (b) -- (zero) -- (c) -- (one) -- (d) -- (zero);
  \draw (2.7,.5) circle (.4); 
\end{tikzpicture}
\caption{Subgroup diagram of $S_3$ with a circle around the Chermak--Delgado sublattice, which consists of the single subgroup isomorphic to $C_3$. This is the only non-nilpotent group with $\delta_{\CD}(G) = 5$.}  
\label{fig: S3}
\end{figure}

\section{Proof of Theorem \ref{thm: lt5}.}\label{sec: lt5}

\begin{lemma}\label{lem: lt5->p-group}
Let $G$ be a nonabelian nilpotent group. If $\delta_{\CD}(G) < 7,$ then $G$ is a $p$-group. If $\delta_{\CD}{G} < 6$, then $G$ is a $2$-group or a $3$-group.
\end{lemma}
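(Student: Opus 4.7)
The plan is to exploit the Sylow decomposition $G = S_1 \times \cdots \times S_k$ together with Lemma \ref{lem:dir_prod}, reducing $\delta_{\CD}(G)$ to counts inside each Sylow subgroup. Because the Sylow subgroups of a nilpotent group have pairwise coprime orders, every subgroup of $G$ factors uniquely as $H_1 \times \cdots \times H_k$ with $H_i \leq S_i$, and Lemma \ref{lem:dir_prod} gives $\CD(G) = \prod_{i=1}^{k} \CD(S_i)$. Letting $N_i$ denote the total number of subgroups of $S_i$, $D_i := |\CD(S_i)|$, and $\delta_i := N_i - D_i$, we obtain
$$\delta_{\CD}(G) \;=\; \prod_{i=1}^{k} N_i \;-\; \prod_{i=1}^{k} D_i \;=\; D_1 \delta_0 + \delta_1 N_0,$$
where $N_0 := \prod_{i \geq 2} N_i$ and $\delta_0 := N_0 - \prod_{i \geq 2} D_i$. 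By Corollary \ref{cor: p_gp_1_not_in_p} each $\delta_i \geq 1$, by Corollary \ref{cor: no_p_groups_nil} (applied to the nilpotent subproduct) also $\delta_0 \geq 1$, and trivially $N_0 \geq 2$ when $k \geq 2$. Since $G$ is nonabelian, some Sylow subgroup, say $S_1$, is nonabelian.

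The main obstacle is to establish the following dichotomy for any nonabelian $p$-group $S$: either $S \cong Q_8$ (in which case $\delta_{\CD}(S) = 1$ and $|\CD(S)| = 5$ by Lemma \ref{lem: quaternion}), or $\delta_{\CD}(S) \geq 3$. To prove it, note that the number of subgroups of $S$ of order $p$ is $\equiv 1 \pmod p$, hence is $1$ or at least $p + 1 \geq 3$. In the latter case Lemma \ref{lem: order_p_witnesses} immediately yields $\delta_{\CD}(S) \geq p + 1 \geq 3$. In the former case $S$ has a unique subgroup of order $p$, so $S$ is cyclic or generalized quaternion; nonabelianness excludes cyclic, and generalized quaternion groups exist only for $p = 2$, so $S = Q_{2^n}$ for some $n \geq 3$. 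Lemma \ref{lem: quaternion} gives $\delta_{\CD}(Q_8) = 1$, and for $n \geq 4$ it gives $|\CD(Q_{2^n})| = 1$; since $Q_{2^n}$ clearly has at least four subgroups in that regime, $\delta_{\CD}(Q_{2^n}) \geq 3$.

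Armed with this dichotomy, the first statement reduces to showing $\delta_{\CD}(G) \geq 7$ whenever $k \geq 2$. If $S_1 \cong Q_8$ then $D_1 = 5$ and $\delta_1 = 1$, so $\delta_{\CD}(G) \geq 5 \cdot 1 + 1 \cdot 2 = 7$; otherwise $\delta_1 \geq 3$ and $\delta_{\CD}(G) \geq 1 \cdot 1 + 3 \cdot 2 = 7$. For the second statement: if $k \geq 2$, the first statement already forces $\delta_{\CD}(G) \geq 7 \geq 6$; and if $k = 1$ with $p \geq 5$, then $G$ is a nonabelian $p$-group that is neither cyclic (since it is nonabelian) nor generalized quaternion (which exists only for $p = 2$), so $G$ has at least $p + 1 \geq 6$ subgroups of order $p$, and Lemma \ref{lem: order_p_witnesses} yields $\delta_{\CD}(G) \geq 6$.
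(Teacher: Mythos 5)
Your proof is correct, but it takes a genuinely different route from the paper's. The paper argues directly with witnesses: once $G$ has two prime divisors, Corollary \ref{cor: no_p_groups_nil} excludes \emph{every} $p$-subgroup of $G$ from $\CD(G)$, so all of the at least $p+4$ subgroups of a nonabelian Sylow $p$-subgroup, together with one nontrivial Sylow $q$-subgroup, give $p+5\geq 7$ subgroups outside $\CD(G)$; this sidesteps any case analysis on the Sylow subgroup itself. You instead turn $\delta_{\CD}$ into the exact identity $\delta_{\CD}(G)=\prod N_i-\prod D_i=D_1\delta_0+\delta_1 N_0$ via Lemma \ref{lem:dir_prod} and the coprime factorization of subgroups of a nilpotent group, and then bound the factors; this forces you to prove a dichotomy for nonabelian $p$-groups ($S\cong Q_8$ with $\delta_{\CD}(S)=1$ but $|\CD(S)|=5$, versus $\delta_{\CD}(S)\geq 3$), with $Q_8$ needing the compensating factor $D_1=5$ to still reach $7$. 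What your approach buys is an exact multiplicative formula that could be reused to compute or bound $\delta_{\CD}$ of any nilpotent group from its Sylow data; what the paper's buys is brevity, since it never needs to know anything about $\CD(S_1)$ itself. Both arguments rest on the same standard fact for the second assertion (a nonabelian $p$-group of odd order has at least $p+1$ subgroups of order $p$, which you justify by the congruence $\equiv 1 \pmod p$ on the number of subgroups of order $p$), and both establish the sharp threshold of $7$.
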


\begin{proof}
Suppose that $G$ is not a $p$-group, i.e., there are at least two prime divisors of $G$. Corollary \ref{cor: no_p_groups_nil} tells us that no $p$-groups are in $\CD(G)$. Moreover, since $G$ is nonabelian, then at least one Sylow subgroup $S$ of $G$ is nonabelian. Nonabelian $p$-groups have at least $p+2$ subgroups above the center, which means they have at least $p+4$ subgroups total. Thus $G$ contains $p+4$ distinct $p$-groups for the prime $p$.

Together with a nontrivial Sylow $q$-group $T$, these form a set of $p+5$ subgroups that witness $\delta_{\CD}(G)\geq p+5$. 

Thus we have \[p+5 \leq \delta_{\CD}(G) < 7,\] and we conclude that $G$ must be a $p$-group. 

Recall that a nonabelian $p$-group of odd order must contain $p+1$ subgroups of order $p$. By Lemma \ref{lem: order_p_witnesses} this means that 
\[p+1 \leq \delta_{\CD}(G) <6\] and we conclude that $p<4.$
 
\end{proof}

We note that for an odd prime $p$, $\delta_{\CD}(Q_8 \times C_p) = 7$ and is a nonabelian nilpotent group. We also note that the extraspecial group of order $5^3$ and exponent $25$ satisfies $\delta_{\CD}(G) = 6$, so both bounds in Lemma \ref{lem: lt5->p-group} are sharp.

\begin{lemma}\label{lem: no_quaternion}
Let $G$ be a nonabelian $p$-group and suppose that $3 \leq \delta_{\CD}(G) \leq 4$. Then $G$ contains at least $p+1$ subgroups of order $p$.  Of these, $p$ of the subgroups together with the identity witness that $\delta_{CD}(G) \geq p+1$.
\end{lemma}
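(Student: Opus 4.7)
The plan is to argue by contradiction and invoke the standard classification of $p$-groups with a unique subgroup of order $p$. Suppose $G$ has at most $p$ subgroups of order $p$. Recall that in any finite $p$-group the number of subgroups of order $p$ is congruent to $1$ modulo $p$, so under this assumption $G$ must have exactly one subgroup of order $p$. By the classification recalled just before Lemma \ref{lem: quaternion}, $G$ is then cyclic or generalized quaternion, and since $G$ is nonabelian we must have $p=2$ and $G\cong Q_{2^k}$ for some $k\geq 3$.

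The next step is to rule out both values of $k$ using Lemma \ref{lem: quaternion}. If $k=3$, then $|\CD(Q_8)|=5$ while $Q_8$ has only $6$ subgroups in total, so $\delta_{\CD}(Q_8)=1$, which contradicts $\delta_{\CD}(G)\geq 3$. If $k\geq 4$, then $|\CD(Q_{2^k})|=1$, so I only need a lower bound on the total number of subgroups of $Q_{2^k}$. The cyclic maximal subgroup $\langle a\rangle$ of order $2^{k-1}\geq 8$ already contributes $k\geq 4$ distinct subgroups, and together with $G$ itself and any cyclic order-$4$ subgroup generated by an element of the form $a^ib$ lying outside $\langle a\rangle$, the count is at least $6$. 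Hence $\delta_{\CD}(Q_{2^k})\geq 5$, contradicting $\delta_{\CD}(G)\leq 4$. This forces $G$ to have at least $p+1$ subgroups of order $p$.

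For the second assertion, I would combine Corollary \ref{cor: p_gp_1_not_in_p}, which gives $1\notin \CD(G)$, with the observation that any two distinct subgroups of order $p$ intersect trivially. If two such subgroups both lay in $\CD(G)$, their intersection $1$ would also lie in $\CD(G)$, a contradiction. Hence at most one subgroup of order $p$ belongs to $\CD(G)$, so at least $p$ subgroups of order $p$ lie outside $\CD(G)$; these, together with $1$, witness $\delta_{\CD}(G)\geq p+1$.

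The main obstacle I anticipate is the small case analysis in the generalized quaternion step, but this reduces to the easy observation that the cyclic maximal subgroup $\langle a\rangle$ alone produces enough subgroups to push $\delta_{\CD}$ above $4$ as soon as $k\geq 4$; no careful enumeration of all subgroups of $Q_{2^k}$ is required.
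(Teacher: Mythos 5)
Your proof is correct and follows essentially the same route as the paper: both arguments reduce to the classification of $p$-groups with a unique subgroup of order $p$ and then dispose of the generalized quaternion possibility using Lemma \ref{lem: quaternion}, and both derive the witnessing claim from $1 \notin \CD(G)$ together with the fact that two distinct order-$p$ subgroups in $\CD(G)$ would force $1 \in \CD(G)$. If anything your write-up is slightly more complete, since you make the mod-$p$ congruence on the number of order-$p$ subgroups explicit and you carry out the subgroup count for $Q_{2^k}$ and the proof of the second assertion, which the paper only sketches.
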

\begin{proof}
If $p>2$, then every nonabelian $p$-group contains at least $p+1$ subgroups of order $p$. 

If $p=2$, and $G$ contains a single subgroup of order $2$, then $G$ is generalized quaternion. Since the generalized quaternion 2-groups of order greater than $8$ contain more than 7 subgroups and contain a single subgroup in their Chermak--Delgado lattices, we conclude that for such groups $\delta_{\CD}(G) > 4$. Moreover $\delta_{\CD}(Q_8) = 1$. 
\end{proof}

Hence for a nonabelian $p$-group $G$ with $3\leq \delta_{\CD}(G) \leq 4$ there are $p$ subgroups of order $p$, which together with the identity witness that $\delta_{CD}(G) \geq p+1$. We will use this to derive a set of conditions for a subgroup of $G$ to satisfy. 

\begin{lemma}\label{lem: lt5_conds}
Let $G$ be a nonabelian $p$-group and suppose $3\leq \delta_{\CD}(G) \leq 4$. For every $K\leq G$ the following hold:
\begin{enumerate}
    \item[1)] There are at most $4$ subgroups of order $p$ in $K$.
    \item[2)] There is a subgroup $Z\leq \mathbf{Z}(K)$ with $|Z|=p$ such that \[|\{H\leq K: p^2\leq |H|\text{ and } Z\not < H\}| \leq \begin{cases} 0 & p > 2 \\ 1 & p = 2.\end{cases}\]
    \item[3)] If $|K|\geq p^3$, then $K \in \CD(K)$ and at most one subgroup $H$ of $K$ can satisfy $\textbf{Z}(K) \leq H$ and $H\notin \CD(K)$. 
\end{enumerate}
\end{lemma}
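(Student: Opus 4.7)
The plan is to exploit the witness-counting provided by Lemma~\ref{lem: no_quaternion}. That lemma accounts for $p+1$ of the witnesses of $\delta_{\CD}(G)$ (the identity together with $p$ subgroups of order $p$), so at most $4-(p+1)=3-p$ ``extra'' witnesses remain, and each such extra witness must have order at least $p^2$. Since the hypothesis forces $p\in\{2,3\}$, the number of extra witnesses is $1$ when $p=2$ and $0$ when $p=3$. Every conclusion will combine this bookkeeping with Lemma~\ref{lem: contain_center} or Lemma~\ref{lem: interval}.

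Part (1) is immediate: every subgroup of $K$ of order $p$ is a subgroup of $G$ of prime order, and Lemma~\ref{lem: order_p_witnesses} applied to $G$ bounds such subgroups by $\delta_{\CD}(G)\leq 4$.

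For part (2), we will pick $Z\leq \mathbf{Z}(K)$ of order $p$, preferably inside $\mathbf{Z}(G)\cap K$. A short case analysis shows $\mathbf{Z}(G)\leq K$ whenever $|K|\geq p^2$: if $K\in\CD(G)$, use Lemma~\ref{lem: contain_center}; otherwise $K$ is the unique extra witness (forcing $p=2$), and provided $|K|>p^2$, some proper subgroup of $K$ of order $\geq p^2$ lies in $\CD(G)$ and so contains $\mathbf{Z}(G)$. The border case $|K|=p^2$ is immediate, since the only subgroup of $K$ of order $\geq p^2$ is then $K$ itself and $Z<K$ is automatic. With this choice of $Z$, any $H\leq K$ with $|H|\geq p^2$ and $Z\not<H$ has $\mathbf{Z}(G)\not\leq H$, so $H\notin\CD(G)$ by Lemma~\ref{lem: contain_center}; hence $H$ is an extra witness, and the count is at most $3-p$.

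For part (3), the first step is to show $K\in\CD(G)$. When $K$ is abelian, $\CD(K)=\{K\}$ makes the conclusion immediate, so assume $K$ is nonabelian; then $K$ is non-cyclic with at least three maximal subgroups of order $|K|/p\geq p^2$, at most one of which is an extra witness, so at least two lie in $\CD(G)$ and their join, which equals $K$, lies in $\CD(G)$ by lattice closure. Lemma~\ref{lem: interval} now yields $\CD(K)=[[\mathbf{Z}(K):K]]\cap\CD(G)$, whence $K\in\CD(K)$, and an exception is precisely an $H$ with $\mathbf{Z}(K)\leq H\leq K$ and $H\notin\CD(G)$. The only candidates are the (at most one) extra witness of order $\geq p^2$ sitting inside $K$ and containing $\mathbf{Z}(K)$, and, if $|\mathbf{Z}(K)|=p$, the subgroup $\mathbf{Z}(K)$ itself whenever $\mathbf{Z}(K)\notin\CD(G)$. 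The main obstacle is ruling out both occurring simultaneously when $p=2$: because $K\in\CD(G)$ forces $\mathbf{Z}(G)\leq\mathbf{Z}(K)$, the equality $|\mathbf{Z}(K)|=2$ forces $\mathbf{Z}(K)=\mathbf{Z}(G)$, and if this central subgroup is not in $\CD(G)$ then no subgroup of $G$ of order $2$ lies in $\CD(G)$, which tightens the count in Lemma~\ref{lem: no_quaternion} enough to forbid any extra witness of order $\geq 4$.
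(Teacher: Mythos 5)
Your proposal is correct and follows essentially the same witness-counting strategy as the paper: the $p+1$ forced witnesses from Lemma \ref{lem: no_quaternion} leave at most $3-p$ further subgroups outside $\CD(G)$, and each part is proved by locating any offending subgroup among these, exactly as in the paper's argument (your part 2 merely reorganizes the paper's case analysis by routing everything through $\mathbf{Z}(G)\leq K$). One small slip in your preamble --- an extra witness need not have order at least $p^2$ (for $p=2$ it can be a third involution) --- is harmless, since wherever you invoke the bound you only need that at most $3-p$ witnesses have order at least $p^2$, which is what the count actually gives.
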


\begin{proof}
As noted in Lemma \ref{lem: no_quaternion}, the group $G$ has at least $p+1$ subgroups not in $\CD(G)$ consisting of the identity and the $p$ subgroups of order $p$. 
1) If $K$ had more than 4 subgroups of order $p$, then it would have at least $5$ such groups. Hence $G$ would have at least 5 subgroups of order $p$ and by Lemma \ref{lem: order_p_witnesses} we would have that $\delta_{\CD}(G) \geq 5$. 

2)  If $K$ has a single subgroup of order $p$ it is cyclic or generalized quaternion and all groups of order greater than $p^2$ contain the unique subgroup of order $p$. 
Now suppose that $K$ has $p+1$ involutions. Hence $G$ has $p+1$ witnesses from Lemma \ref{lem: no_quaternion}. There can only be one additional witness with order greater than $p$ if $p=2$. 

Suppose that $p=2$. Then $K$ contains the three subgroups $A=\langle a \rangle $, $B=\langle b\rangle$, and $C=\langle ab\rangle$ of order 2. If none of them are in $\CD(G)$, then the four subgroups not in $\CD(G)$ are $\{1, A,B,C\}$.  Now $\langle a, b\rangle$ contains a central involution, and hence is a product of any two of the subgroups of order $2$ and has order $4$.  Now $\langle a, b\rangle$ is in $\CD(G)$, and it follows that $\textbf{Z}(G) = \langle a, b\rangle$. Moreover, since $\delta_{\CD}(G) = 4$ in this case, we conclude that every subgroup of $G$ of order $4$ or larger is in $\CD(G)$, and thus contains $\textbf{Z}(G) = \langle a,b\rangle$.

Otherwise let $Z\leq K$ of order 2 in $\CD(G)$. Since $Z\in \CD(G)$, we know that $Z=\mathbf{Z}(G)$ and hence $Z\leq \mathbf{Z}(K)$. Moreover, since the other two involutions and the identity would not be in $\CD(G)$, we have $\delta_{\CD}(G) \geq 3$ and thus at most one other subgroup of $G$ (which must have order greater than 2) could not be in $\CD(G)$ and thus not contain $Z$. 

Suppose that $p>2$. Then by Lemma \ref{lem: lt5->p-group}, $p=3$ and from \ref{lem: no_quaternion} we note that exactly one subgroup of order $3$ is in $\CD(G)$; call this subgroup $H$ and note that $H=\textbf{Z}(G)$. Then every subgroup of $G$ of order greater than $3$ is in $\CD(G)$ and must contain $H$ as a subgroup. 



3) We note that if $K$ is cyclic then $\mathbf{Z}(K)=K \in \CD(K)$. Suppose that $|K| \geq p^3$ is not cyclic and by way of contradiction that $K\not \in \CD(K)$. By Lemma \ref{lem: interval} we have that $K\not \in \CD(G)$. Moreover, since $\CD(G)$ is closed under products we must have that at least $p$ of the maximal subgroups of $K$ (which have order $\geq p^2$) are not in $\CD(G)$. Hence $\delta_{\CD}(G) \geq 2(p+1) > 5$. We conclude that $K\in \CD(K)$.

 Suppose that $H$ and $J$ are subgroups of $K$ with $\mathbf{Z}(K)\leq H$ and $\mathbf{Z}(K)\leq J$ such that neither $H$ nor $J$ are in $\CD(K)$. Then $H$ and $J$ are not in $\CD(G)$ by Lemma \ref{lem: interval}. Since $K\in \CD(K)$, we conclude that $\mathbf{Z}(K) < H$ and $\mathbf{Z}(K) < J$ and thus the orders of both $H$ and $J$ are greater than or equal to $p^2$. Hence, $H$, $J$, the identity, and $p$ of the subgroups of order $p$ in $G$ would witness that $\delta_{\CD}(G) \geq p+3 \geq 5$. 
\end{proof}

Lemma \ref{lem: lt5_conds} gives us conditions on the types of subgroups a $2$-group or $3$-group $G$ can have if $3\leq \delta_{\CD}(G) \leq 4$. This allows us to specify what types of subgroups $G$ can have of a given order. For example, a $2$-group $G$ with $3\leq \delta_{\CD}(G) \leq 4$ cannot contain the dihedral group of order $8$ as a subgroup because the dihedral group of order $8$ has 5 involutions. Similarly, it cannot contain the elementary abelian group of order $8$ as a subgroup. This means all of its subgroups of order $8$ are either cyclic, quaternion, or isomorphic to $C_4\times C_2$. We can computationally then search over all of the $2$-groups of a given order to look for potential subgroups of a 2-group $G$ with $3\leq \delta_{\CD}(G) \leq 4.$ 

We will pause to introduce the reader to another family of $p$-groups. These groups are commonly written as $M_{p^k} = \langle a, b | a^{p^{k-1}},b^{p}, a^b= a^{p^{k-2}+1}\rangle.$ For an odd prime $p$, the groups $M_{p^k}$ are the only nonabelian $p$-groups with cyclic maximal subgroups. 

\begin{lemma}\label{lem: 32}
Let $G$ be a $2$-group and suppose $3\leq \delta_{\CD}(G) \leq 4$. Then all subgroups of order $32$ in $G$ are isomorphic to one of the following groups: $C_{32},\, C_{16}\times C_2,\,$ or $ M_{32}$. 
\end{lemma}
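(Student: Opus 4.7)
My plan is to apply the three necessary conditions from Lemma \ref{lem: lt5_conds} to any subgroup $K \leq G$ of order $32$ and to rule out all isomorphism types except the three listed. Condition (1) forces $K$ to have at most four subgroups of order $2$. Since the number of involutions in a finite group of even order is always odd (non-involutions pair with their distinct inverses), $K$ has exactly one or three involutions, which splits the argument into two cases.

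If $K$ has a unique involution, then by the classical theorem cited just before Lemma \ref{lem: quaternion}, $K$ is cyclic or generalized quaternion, so $K \cong C_{32}$ or $K \cong Q_{32}$. To rule out $Q_{32}$, I would invoke Lemma \ref{lem: quaternion} for $|\CD(Q_{32})| = 1$, and then observe from direct Chermak--Delgado measure computations that the unique element of $\CD(Q_{32})$ is the cyclic maximal subgroup of order $16$ rather than $Q_{32}$ itself. Hence $Q_{32} \notin \CD(Q_{32})$, contradicting condition (3), and so $K \cong C_{32}$.

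If $K$ has exactly three involutions, they generate $\Omega_1(K) \cong C_2 \times C_2$. I would enumerate the $2$-groups of order $32$ with $\Omega_1 \cong C_2 \times C_2$ (the abelian ones are $C_{16} \times C_2$ and $C_8 \times C_4$; the nonabelian ones include $M_{32}$, $Q_8 \times C_4$, and a short list of further central and semidirect products of $2$-groups of order at most $16$), and then check conditions (2) and (3) one by one. For the two survivors $C_{16} \times C_2$ and $M_{32}$, I would set $Z = \langle a^8 \rangle$ with $a$ a generator of the cyclic subgroup of order $16$; a direct enumeration of subgroups of orders $4$, $8$, and $16$ shows every one contains $Z$, so condition (2) holds with zero exceptions. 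Condition (3) is automatic in the abelian case, and for $M_{32}$ follows from a short calculation showing every subgroup containing $\mathbf{Z}(M_{32}) = \langle a^2 \rangle$ attains the maximum Chermak--Delgado measure. To rule out $C_8 \times C_4$, one notes that each of its three involutions lies in exactly three of its seven order-$4$ subgroups, so for every choice of $Z$ at least four order-$4$ subgroups fail to contain $Z$, violating condition (2); the remaining nonabelian candidates such as $Q_8 \times C_4$ similarly supply too many cyclic order-$4$ subgroups avoiding any fixed central involution.

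The main obstacle will be the exhaustive verification in the three-involution case. Once the list of candidate isomorphism types is assembled (either via the classical classification of $2$-groups with $|\Omega_1|=4$, or by a brief computational search), checking conditions (2) and (3) for each reduces to routine but laborious subgroup bookkeeping. As the paper flags just before the lemma, this is most cleanly carried out with a computer algebra system, with the theoretical conditions of Lemma \ref{lem: lt5_conds} serving to cut the search space down to a manageable handful.
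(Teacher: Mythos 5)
Your proposal is correct in substance and rests on the same foundation as the paper's proof: both arguments amount to testing the candidate groups of order $32$ against the three conditions of Lemma \ref{lem: lt5_conds} and discarding everything except $C_{32}$, $C_{16}\times C_2$, and $M_{32}$. The difference is one of execution. The paper's proof is a one-line appeal to a \textsc{GAP} computation over all $51$ groups of order $32$ (summarized in the Venn diagram of Figure \ref{fig: 2-Venn}), with no by-hand reduction at all. You instead extract real theoretical mileage from condition 1): the parity argument forces one or three involutions, the one-involution case is settled by the cyclic/generalized-quaternion dichotomy together with $m^*(Q_{32})=(16)^2 > 32\cdot|\textbf{Z}(Q_{32})|$ (so $Q_{32}\notin\CD(Q_{32})$, violating condition 3)), and your eliminations of $C_8\times C_4$ (each involution lies in only three of the seven order-$4$ subgroups) and verifications for $C_{16}\times C_2$ and $M_{32}$ check out. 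What your route buys is a proof that is largely human-verifiable and localized to a much smaller search space; what it costs is that the three-involution case still requires a complete enumeration of the order-$32$ groups with $\Omega_1\cong C_2\times C_2$, which you describe only as ``a short list of further central and semidirect products'' without exhibiting it. That enumeration is the one genuinely unfinished step in your write-up, and it is precisely the step the paper also declines to do by hand, delegating it to the computer; to make your argument fully rigorous you would need either to cite a classification of $2$-groups with $|\Omega_1|=4$ or to run the same kind of machine check on the reduced list.
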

\begin{proof}
Using \textsc{GAP} \cite{GAP} we can sort through all groups of a given order that meet the conditions of Lemma \ref{lem: lt5_conds}. For groups of order 32, we obtain the Venn diagram in Figure \ref{fig: 2-Venn}. Code to do this in the \textsc{GITHUB} repo: \href {https://github.com/7cocke/chermak\_delgado\_lt5}{https://github.com/7cocke/chermak\_delgado\_lt5}.  Different authors wrote the code independently of each other in the two languages. 

\begin{figure}
    \centering
    \includegraphics[scale = .7]{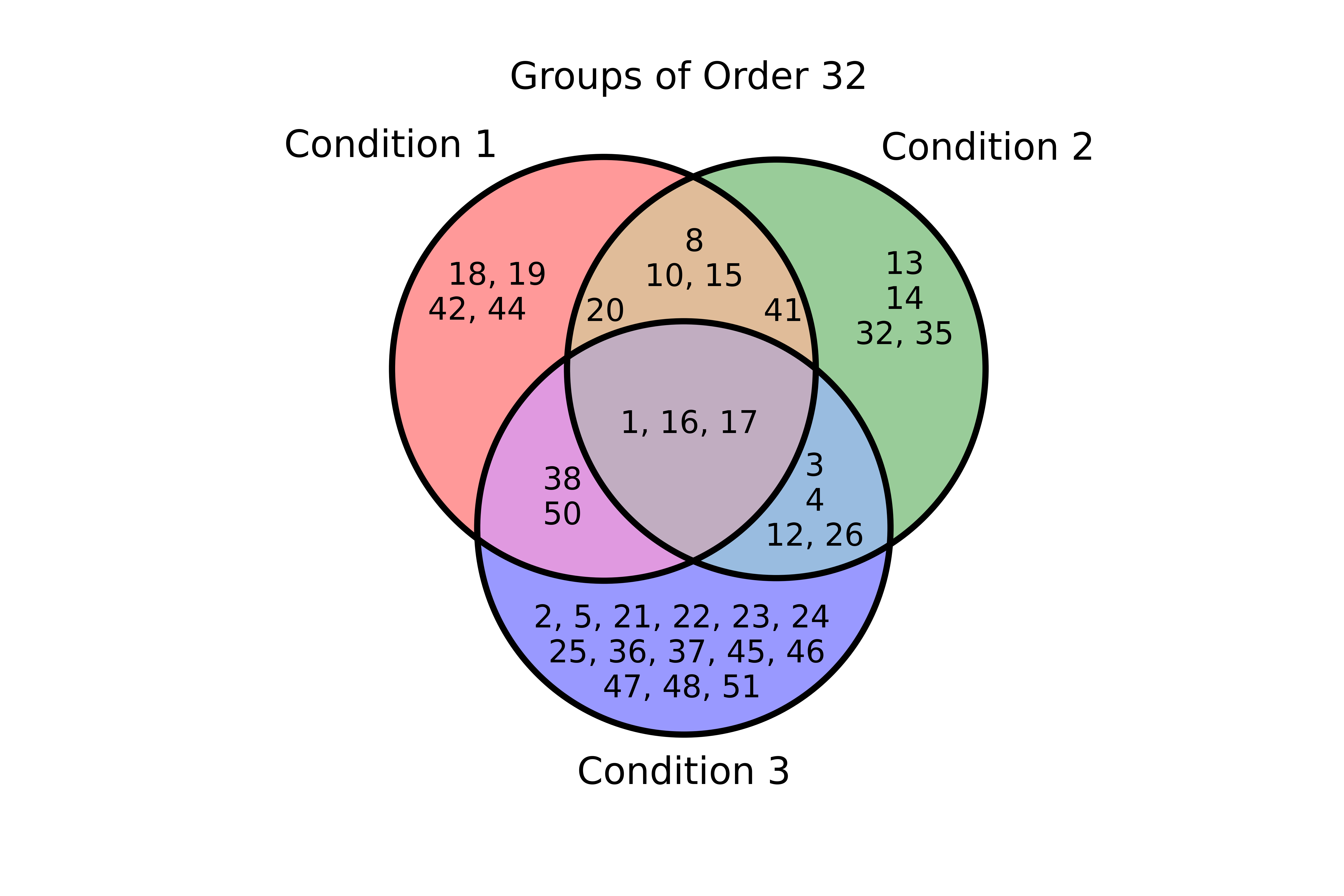}
    \caption{Groups of order 32 that satisfy at least one of conditions 1), 2), and 3) of Lemma \ref{lem: lt5_conds} identified by their SmallGroup index among groups of order 32, e.g., $1$ corresponds to SmallGroup(32,1) which is the cyclic group of order 32.}
    \label{fig: 2-Venn}
\end{figure}

\end{proof}

We could continue our calculations and would see that there are 3 possible subgroups of order 64 which are $C_{64}, C_{32} \times C_2$ and $M_{64}$. Similarly, we would see that there are 3 possible subgroups of order 128, i.e., the groups $C_{128}, C_{64}\times C_2, M_{128}.$ There are also 3 possible subgroups of order 256. This observation motivates the following theorem whose proof relies heavily on the exhaustive work of Berkovich and Janko \cite{BJ06}.

\begin{theorem}\label{thm: up}
Let $G$ be a $p$-group with order $|G| = p^{k+1}$ for $k>4$. Suppose that $G$ contains at most p+1 subgroups of order $p$ and that every maximal subgroup of $G$ is isomorphic to one of $C_{p^k}, C_{p^{k-1}}\times C_p$, and $M_{p^k}$. Then $G$ is isomorphic to one of the groups $C_{p^k+1}, C_{p^k} \times C_p$, or $M_{p^{k+1}}$.
\end{theorem}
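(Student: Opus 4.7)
The plan is to show that $G$ must contain a cyclic maximal subgroup, and then to apply the classical classification of $p$-groups with a cyclic subgroup of index $p$ (see, e.g., \cite[4.4]{S86}): such a group of order $p^{k+1}$ is one of $C_{p^{k+1}}$, $C_{p^k}\times C_p$, $M_{p^{k+1}}$, and, only when $p=2$, $D_{2^{k+1}}$, $SD_{2^{k+1}}$, or $Q_{2^{k+1}}$. The first three are precisely the desired conclusion. The dihedral and semidihedral $2$-groups each contain many more than $p+1=3$ involutions, violating the hypothesis that $G$ has at most $p+1$ subgroups of order $p$, and the generalized quaternion group $Q_{2^{k+1}}$ has $Q_{2^k}$ as a maximal subgroup, which for $k>4$ is not in the list $\{C_{2^k},C_{2^{k-1}}\times C_2,M_{2^k}\}$.

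To show $G$ has a cyclic maximal subgroup, suppose toward a contradiction that every maximal subgroup of $G$ is $C_{p^{k-1}}\times C_p$ or $M_{p^k}$. Since $G$ is non-cyclic, every element of $G$ lies in some maximal subgroup, forcing $\exp(G)=p^{k-1}$. A direct computation in each allowed maximal type yields $\Omega_1(M)\cong C_p\times C_p$ and $\Phi(M)=M^p[M,M]\cong C_{p^{k-2}}$. The hypothesis that $G$ has at most $p+1$ subgroups of order $p$, i.e.\ $|\Omega_1(G)|\leq p^2$ (noting that $|\Omega_1(G)|=p$ would force $G$ cyclic or generalized quaternion, each inconsistent with the present assumptions), then gives $|\Omega_1(G)|=p^2$ and $\Omega_1(M)=\Omega_1(G)$ for every maximal $M$; hence $\Omega_1(G)\leq\Phi(G)$. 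Comparing $|\Phi(G)|$ with $|\Phi(M)|=p^{k-2}$ and using $\Omega_1(G)\leq\Phi(G)$ rules out $d(G)\geq 3$ (which would collapse $\Phi(G)$ onto the cyclic $\Phi(M)$, contradicting $|\Omega_1(\Phi(G))|\geq p^2$), so $d(G)=2$ and $|\Phi(G)|=p^{k-1}$. Applying the cyclic-index-$p$ classification to $\Phi(G)$ itself (where $|\Omega_1(\Phi(G))|=p^2$ eliminates the cyclic, dihedral, semidihedral, and quaternion possibilities) yields $\Phi(G)\in\{C_{p^{k-2}}\times C_p,\,M_{p^{k-1}}\}$.

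The main obstacle is extracting a contradiction from this configuration. The $p+1$ maximal subgroups $M_1,\ldots,M_{p+1}$ of $G$ contribute $p+1$ cyclic subgroups $\Phi(M_i)$ of order $p^{k-2}$ in $\Phi(G)$, yet each admissible $\Phi(G)$ contains only $p$ such cyclic subgroups, so by pigeonhole some of the $\Phi(M_i)$ must coincide. If all of them collapse to a common $C\trianglelefteq G$, then $G/C$ has order $p^3$ with every maximal subgroup elementary abelian of rank two, which forces $G/C\cong C_p^3$ (contradicting $d(G)=2$) or, for odd $p$, $G/C$ isomorphic to the Heisenberg group of order $p^3$. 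Dispatching this Heisenberg subcase, together with the complementary subcase where the $\Phi(M_i)$ do not all coincide (where one must analyze how two distinct cyclic subgroups of index $p$ in $\Phi(G)$ can each sit as $\Phi(M_i)$ for some maximal), is the most delicate step; I expect it is handled most efficiently by appealing to the detailed structural classification of Berkovich and Janko \cite{BJ06}, which the paper already cites for exactly this kind of enumeration, to rule out any $p$-group of order $p^{k+1}$ with $k>4$ whose maximal subgroups all lie in the given list yet which itself possesses no cyclic maximal subgroup.
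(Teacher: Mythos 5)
Your opening reduction is sound: if $G$ has a cyclic maximal subgroup, the classical classification of $p$-groups with a cyclic subgroup of index $p$ leaves only $C_{p^{k+1}}$, $C_{p^k}\times C_p$, $M_{p^{k+1}}$ and, for $p=2$, the dihedral, semidihedral and generalized quaternion groups, and you correctly eliminate the last three (the first two by counting involutions, $Q_{2^{k+1}}$ because its maximal subgroups $Q_{2^k}$ are not on the allowed list). But the entire content of the theorem then lives in the claim that $G$ \emph{has} a cyclic maximal subgroup, and you do not prove it. Your analysis of the contrary case does legitimately pin down $\Omega_1(G)\cong C_p\times C_p$ (note that ``at most $p+1$ subgroups of order $p$'' gives $|\Omega_1(G)|\leq p^2$ here only because each $\Omega_1(M)$ already accounts for $p+1$ such subgroups and every order-$p$ element lies in some maximal $M$), then $d(G)=2$, $|\Phi(G)|=p^{k-1}$, and $\Phi(G)\in\{C_{p^{k-2}}\times C_p,\,M_{p^{k-1}}\}$. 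At that point, however, the argument stops: the Heisenberg-quotient subcase and the subcase in which the $\Phi(M_i)$ do not all coincide are explicitly left open, with the remark that you ``expect'' they can be handled by appealing to Berkovich--Janko. That is not a proof but a statement of the remaining problem; an appeal to a classification is only an argument once you identify which classified family your configuration belongs to and check that no member of it satisfies your hypotheses. This unfinished case is precisely the hard case of the theorem, so the gap is genuine.

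For contrast, the paper's proof makes exactly that identification, via a different decomposition: since every maximal subgroup of $G$ is abelian or minimal nonabelian, every subgroup of index $p^2$ in $G$ is abelian, so $G$ is abelian (whence $C_{p^{k+1}}$ or $C_{p^k}\times C_p$ by the rank condition), or minimal nonabelian (whence $M_{p^{k+1}}$ by R\'edei's classification, the bound on subgroups of order $p$, and $|G|>p^4$), or a proper $A_2$-group, in which case all proper subgroups are metacyclic and $G$ is either minimal nonmetacyclic (Blackburn's short list, excluded) or a metacyclic $A_2$-group (Berkovich--Janko's list, excluded by the hypothesis on maximal subgroups). If you wish to keep your route through a cyclic maximal subgroup, you must actually close your two open subcases --- e.g.\ in the subcase where all $\Phi(M_i)$ equal a common $C$ one has $\mho_1(G)\leq C$ cyclic while $\exp(G)=p^{k-1}$ and $\Omega_1(G)\not\leq C$, and in the other subcase one has two distinct $G$-invariant cyclic maximal subgroups of $\Phi(G)$ to exploit --- rather than defer them to an unspecified citation.
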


\begin{proof}
If $G$ is abelian, then it must be isomorphic to either $C_{p^k}$ or $C_{p^{k-1}} \times C_p$. 

The rest of the proof will cite a number of results from the encyclopedic work of Berkovich and Janko \cite{BJ06} which classifies $p$-groups all of whose subgroups of index $p^2$ are abelian. 

Suppose that all maximal subgroups of $G$ are abelian. Then $G$ is minimal nonabelian. Berkovich and Janko \cite[\textbf{3.1}]{BJ06} attribute the classification of minimal nonabelian $p$-groups to R\'edei. Our group $G$ contains at most $p+1$ subgroups of order $p$ and has order greater than $p^4$. If $G$ were minimal nonabelian, it would have to be isomorphic to $M_{p^{k+1}}$. 

Hence $G$ must contain a nonabelian maximal subgroup. Thus $G$ is an $A_2$-group, i.e., a group all of whose subgroups of index $p^2$ are abelian. Moreover, all proper subgroups of $G$ would be metacyclic. If $G$ itself were not metacyclic, then it would be minimal nonmetacyclic. Berkovich and Janko \cite[\textbf{1.1(l)}]{BJ06} cite Blackburn \cite[\textbf{3.2}]{Blackburn61} for the classification of minimal nonmetacyclic groups, none of which could be our group $G$. Hence the group $G$ is a metacyclic $A_2$-group. Such groups are classified by Berkovich and Janko \cite[\textbf{5.2}]{BJ06}. Again, since we have restricted the maximal subgroups of $G$, it cannot be any of these groups. 

We conclude that $G$ is either abelian or isomorphic to $M_{p^{k+1}}$. 
\end{proof}

For $3$-groups, Lemma \ref{lem: lt5_conds} can be used to establish the following.

\begin{lemma}
Let $G$ be a nonabelian $3$-group with  $\delta_{\CD}(G) = 4$. Then all subgroups of order $243$ in $G$ are isomorphic to one of $C_{243},\, C_{81}\times C_3,\,$ or $M_{243}$.
\end{lemma}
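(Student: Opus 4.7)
The plan is to mirror the proof of Lemma \ref{lem: 32}, now in the prime $p=3$ and order $3^5$ case. Let $K \leq G$ be any subgroup with $|K|=243$. Because $G$ is a nonabelian $3$-group with $\delta_{\CD}(G)=4$, Lemma \ref{lem: lt5_conds} applies to every subgroup of $G$; in particular to $K$. I would specialize the three conditions of Lemma \ref{lem: lt5_conds} with $p=3$ and use them as filters on the (finite) list of isomorphism classes of groups of order $243$.

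Concretely, the conditions become: (1) $K$ has at most $4$ subgroups of order $3$; since the number of subgroups of order $3$ in a $3$-group is congruent to $1\pmod{3}$, this forces either a unique subgroup of order $3$ (so $K$ is cyclic) or exactly $4$ such subgroups. (2) Since $p=3>2$, there is a central subgroup $Z\leq \mathbf{Z}(K)$ of order $3$ which is contained in \emph{every} subgroup of $K$ of order at least $9$; equivalently, $K$ has a unique minimal subgroup that sits inside all nontrivial non-minimal subgroups. (3) Because $|K|\geq 27$, we have $K\in\CD(K)$, and at most one subgroup of $K$ containing $\mathbf{Z}(K)$ fails to lie in $\CD(K)$; in particular $\CD(K)$ is very large, so $K$ is highly constrained in terms of nonabelianness.

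I would then run the same kind of computer-assisted enumeration used in Lemma \ref{lem: 32}, iterating over \texttt{SmallGroup(243,$i$)} for $i$ in the appropriate range (there are $67$ groups of order $243$) and discarding those failing conditions (1), (2), or (3). The abelian candidates passing (1) and (2) are exactly $C_{243}$ and $C_{81}\times C_3$; the only nonabelian $3$-group of order $243$ with a unique minimal subgroup contained in every subgroup of order $\geq 9$ and still small enough a $\CD$-deficit to pass (3) should be $M_{243}=\langle a,b\mid a^{81}, b^3, a^b=a^{28}\rangle$, which has a cyclic maximal subgroup and hence exactly four subgroups of order $3$ all meeting in $\mathbf{Z}(M_{243})$.

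The main obstacle is exactly the one in the previous lemma: verifying that no exotic group of order $243$ slips through the three filters. This is not conceptually hard but must be checked exhaustively; I would rely on the \textsc{GAP} script in the accompanying GitHub repository, extended from order $32$ to order $243$, to produce a Venn-style diagram analogous to Figure \ref{fig: 2-Venn} and read off the triple intersection as $\{C_{243},\, C_{81}\times C_3,\, M_{243}\}$. Once that computational check is in place, the lemma follows immediately by applying it to every order-$243$ subgroup of $G$.
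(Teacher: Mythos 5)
Your proposal matches the paper's approach: the paper states this lemma without a written proof and, in the proof of Theorem \ref{thm: lt53}, explains that it is established by running the same \textsc{GAP} filter from Lemma \ref{lem: 32} (i.e., the conditions of Lemma \ref{lem: lt5_conds} specialized to $p=3$) over the groups of order $243$, which is exactly what you describe. The only quibble is your aside that the four subgroups of order $3$ in $M_{243}$ ``all meet in $\mathbf{Z}(M_{243})$'' --- distinct subgroups of prime order intersect trivially and only one of the four is central --- but this does not affect the argument.
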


These are the same 3 types of groups that occur in Theorem \ref{thm: up}. This means that as in the 2-group case, for a $3$-group with $\delta_{\CD}(G) = 4,$ there are at most three isomorphism classes of subgroups for every order greater than 243.

\begin{theorem} \label{thm: lt52}
If $G$ is a nonabelian $2$-group and $\delta_{\CD}(G) \leq 4$, then $G$ is the quaternion group of order $8$.
\end{theorem}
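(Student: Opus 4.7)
The plan is to split on $\delta_{\CD}(G)$: the case $\delta_{\CD}(G) \leq 2$ forces $G \cong Q_8$ almost immediately, while the case $3 \leq \delta_{\CD}(G) \leq 4$ is to be ruled out by an inductive reduction via Theorem \ref{thm: up} to the modular maximal-cyclic groups $M_{2^{k+1}}$, which are then shown to have $\delta_{\CD} \geq 5$ by an explicit measure computation.

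For $\delta_{\CD}(G) \leq 2$, Lemma \ref{lem: order_p_witnesses} forces $G$ to have at most two subgroups of prime order; since a nontrivial $2$-group has an odd number of subgroups of order $2$, this count is exactly one. So $G$ is cyclic or generalized quaternion, and being nonabelian it is $Q_{2^k}$ for some $k \geq 3$. Lemma \ref{lem: quaternion} shows that for $k \geq 4$ the lattice $\CD(Q_{2^k})$ is a singleton while the total subgroup count of $Q_{2^k}$ far exceeds three, so only $G \cong Q_8$ remains.

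Now assume $3 \leq \delta_{\CD}(G) \leq 4$ and seek a contradiction. Combining Lemma \ref{lem: no_quaternion} with Lemma \ref{lem: lt5_conds}(1), $G$ must have exactly three subgroups of order $2$: a fourth would produce five non-$\CD$ witnesses (the four subgroups of order $2$ plus the identity). In particular every subgroup of $G$ has at most three subgroups of order $2$, which matches the ``at most $p+1$'' hypothesis of Theorem \ref{thm: up}. For $|G| \leq 16$ the claim is verified computationally, in the spirit of Lemma \ref{lem: 32} and using the code already referenced, restricted to the few nonabelian $2$-groups of order at most $16$. For $|G| \geq 32$, I would induct on $|K|$ to show that every subgroup $K \leq G$ of order $2^m \geq 32$ is isomorphic to $C_{2^m}$, $C_{2^{m-1}} \times C_2$, or $M_{2^m}$: the base case $|K| = 32$ is exactly Lemma \ref{lem: 32}, and for $|K| \geq 64$ the inductive step applies Theorem \ref{thm: up}, whose maximal-subgroup hypothesis is supplied by the inductive hypothesis. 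Taking $K = G$ and discarding the two abelian options leaves $G \cong M_{2^{k+1}}$ with $k \geq 4$.

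The main obstacle is to show $\delta_{\CD}(M_{2^{k+1}}) \geq 5$ for such $k$. Writing $G = \langle a, b \mid a^{2^k}, b^2, a^b = a^{2^{k-1}+1}\rangle$, a direct check gives $\textbf{Z}(G) = \langle a^2\rangle$ of order $2^{k-1}$ and $G' = \langle a^{2^{k-1}}\rangle$ of order $2$, so $m^*(G) \geq m_G(\langle a\rangle) = 2^k \cdot 2^k = 2^{2k}$. I would then exhibit five distinct subgroups of strictly smaller Chermak--Delgado measure: the trivial subgroup, $\langle a^{2^{k-1}}\rangle$, the two noncentral involution subgroups $\langle b\rangle$ and $\langle a^{2^{k-1}} b\rangle$, and the Klein four subgroup $\langle a^{2^{k-1}}, b\rangle$. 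A short centralizer calculation (using that $\textbf{C}_G(b) = \langle a^2, b\rangle$ has order $2^k$) yields respective measures $2^{k+1}$, $2^{k+2}$, $2^{k+1}$, $2^{k+1}$, $2^{k+2}$, all strictly less than $2^{2k}$ for $k \geq 3$. Hence none of these five subgroups lies in $\CD(G)$, contradicting $\delta_{\CD}(G) \leq 4$ and completing the proof.
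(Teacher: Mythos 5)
Your argument is correct and follows the same overall route as the paper: force $G$ to have exactly three involutions, use Lemma \ref{lem: 32} and then Theorem \ref{thm: up} inductively to push every subgroup of order at least $32$ (and hence $G$ itself) into the list $C_{2^m}$, $C_{2^{m-1}}\times C_2$, $M_{2^m}$, rule these out, and dispose of the small orders computationally. Where you genuinely add something is the elimination of $M_{2^{k+1}}$: the paper simply asserts that none of the three families satisfies $\delta_{\CD}(G)\leq 4$, while you exhibit five explicit subgroups --- $1$, $\langle a^{2^{k-1}}\rangle$, $\langle b\rangle$, $\langle a^{2^{k-1}}b\rangle$, and $\langle a^{2^{k-1}},b\rangle$ --- whose measures ($2^{k+1}$ or $2^{k+2}$) are beaten by $m_G(\langle a\rangle)=2^{2k}$ once $k\geq 3$; your centralizer computations check out, so this fills in a step the paper leaves to the reader. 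Your separate treatment of $\delta_{\CD}(G)\leq 2$ via the generalized quaternion classification and Lemma \ref{lem: quaternion} mirrors the paper's opening single-involution argument. One small inaccuracy: a hypothetical fourth subgroup of order $2$ would yield only four witnesses via Lemma \ref{lem: order_p_witnesses} (one involution subgroup could equal $\mathbf{Z}(G)$ and lie in $\CD(G)$), not five as you claim; but since the number of involutions in a nontrivial $2$-group is odd, the relevant alternative is five or more involutions, where Lemma \ref{lem: order_p_witnesses} does give $\delta_{\CD}(G)\geq 5$, so your conclusion that $G$ has exactly three involutions stands.
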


\begin{proof}
If $G$ has a single involution, then $G$ is generalized quaternion and we note that $m^*(G) = \left(\frac{|G|}{2}\right)^2$. If $|G| > 8$, then $\CD(G)$ is a single subgroup and $\delta_{\CD}(G) > 4.$

Suppose by way of contradiction that $G$ is not the quaternion group of order $8$. We must have that $3\leq \delta_{\CD}(G) \leq 4$. As noted, in Lemma \ref{lem: 32} the only subgroups of order $32$ in $G$ would be isomorphic to $C_{32}, C_{16}\times C_2$, or $M_{32}$. By Theorem \ref{thm: up}, the only subgroups of order 64 in $G$ would be isomorphic to $C_{64}, C_{32}\times C_2$ or $M_{64}$. Continuing in this manner, we see that $G$ itself would be isomorphic to either $C_{2^k}, C_{2^{k-1}}\times C_{2}$ or $M_{2^k}$. However, none of these groups satisfy $\delta_{\CD}(G) \leq 4$. 

Thus $|G| \leq 32$. A computational search using either $\textsc{GAP}$ or $\textsc{MAGMA}$ shows that there is no such nonabelian $2$-group. 
\end{proof}

We note that the dihedral group of order $8$ has exactly $5$ groups not contained in $\CD(G)$, i.e., $\delta_{\CD}(G) = 5$.

\begin{theorem}\label{thm: lt53}
If $G$ is a nonabelian $3$-group, then $\delta_{\CD}(G) \geq 4$ with equality if and only if $G$ is the extraspecial group of order 27 and exponent $9$. 
\end{theorem}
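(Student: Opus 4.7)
The plan is to establish the inequality $\delta_{\CD}(G) \geq 4$ first and then characterize the equality case. The inequality is immediate: every nonabelian $p$-group of odd order contains at least $p+1$ subgroups of order $p$ (as noted in the proof of Lemma \ref{lem: no_quaternion}), so a nonabelian $3$-group has at least $4$ subgroups of order $3$, and Lemma \ref{lem: order_p_witnesses} then yields $\delta_{\CD}(G) \geq 4$.

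For the equality case, I would first verify by direct computation that $M_{27}$ achieves $\delta_{\CD} = 4$. The group $M_{27}$ has $10$ subgroups in total (the trivial subgroup, $4$ of order $3$, $4$ maximal subgroups of order $9$, and $G$), and one checks that the Chermak--Delgado maximum $m^{*} = 81$ is attained precisely on $\textbf{Z}(G)$, the four maximal subgroups, and $G$; hence $|\CD(M_{27})| = 6$ and $\delta_{\CD}(M_{27}) = 4$. The only other nonabelian group of order $27$ is the Heisenberg group of exponent $3$, which has $13$ subgroups of order $3$ and thus $\delta_{\CD} \geq 13$.

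For $|G| > 27$ I would split by order. Order $81$ requires a direct computational check (analogous to Lemma \ref{lem: 32}) verifying that no nonabelian group of order $81$ satisfies the constraints of Lemma \ref{lem: lt5_conds} together with $\delta_{\CD} = 4$. Order $243$ is handled by the lemma preceding this theorem, which forces $G$ to be one of $C_{243}$, $C_{81} \times C_3$, or $M_{243}$; only the last is nonabelian. For $|G| = 3^{k+1}$ with $k \geq 5$, I would induct on $k$: the inductive hypothesis places every subgroup of $G$ of order $3^k$ into one of $C_{3^k}$, $C_{3^{k-1}} \times C_3$, or $M_{3^k}$, and Theorem \ref{thm: up} then places $G$ itself in the corresponding list for order $3^{k+1}$. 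In every case with $|G| \geq 81$, the only surviving nonabelian candidate is $G \cong M_{3^k}$ for some $k \geq 4$.

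It remains to show $\delta_{\CD}(M_{3^k}) > 4$ whenever $k \geq 4$. The key point is that $\textbf{Z}(M_{3^k}) = \langle a^3 \rangle$ has order $3^{k-2} \geq 9$, while the subgroup $\langle a^{3^{k-2}}, b \rangle$ is elementary abelian of order $9$ and hence does not contain the center. By Lemma \ref{lem: contain_center} this subgroup lies outside $\CD(G)$, and likewise none of the four subgroups of order $3$ contains $\textbf{Z}(G)$ (since $|\textbf{Z}(G)| > 3$). Together with the trivial subgroup these furnish six distinct subgroups outside $\CD(G)$, giving $\delta_{\CD}(M_{3^k}) \geq 6$ and hence a contradiction. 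The main obstacle I anticipate is the order $81$ step: unlike the order $243$ case, the paper states no analogous structural lemma, so one must computationally enumerate the ten nonabelian groups of order $81$ and rule each one out.
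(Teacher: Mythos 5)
Your proposal is correct and follows essentially the same route as the paper: the lower bound comes from the $p+1$ subgroups of order $3$ via Lemma \ref{lem: order_p_witnesses}, the order-$243$ restriction combined with Theorem \ref{thm: up} reduces the problem to small orders and the family $M_{3^k}$, and the surviving candidates are checked directly. You in fact supply more detail than the printed proof, which compresses the verification that $\delta_{\CD}(M_{27})=4$, the elimination of $M_{3^k}$ for $k\geq 4$, and the order-$27$ and order-$81$ computations into the single closing sentence ``Regardless, $G$ does not satisfy $\delta_{\CD}(G) = 4$ except for when $G$ is $M_{27}$.''
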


\begin{proof}
Using the same code as in Lemma \ref{lem: 32} we see that all of the subgroups of $G$ of order 243 are isomorphic to $C_{243}, C_{81} \times C_3$, or $M_{243}$. By Theorem $\ref{thm: up}$, either $|G|\leq 81$ or $G$ itself is isomorphic to one of $C_{3^k}, C_{3^{k-1}}\times C_3,$ or $M_{3^k}$. Regardless, $G$ does not satisfy $\delta_{\CD}(G) = 4$ except for when $G$ is $M_{27}$ which is the extraspecial group of order 27 and exponent 9. 
\end{proof}

In Figure \ref{fig: M27} we display the subgroup diagram of $M_{27}$ the only nonabelian nilpotent group with $\delta_{CD}(G) =4$.

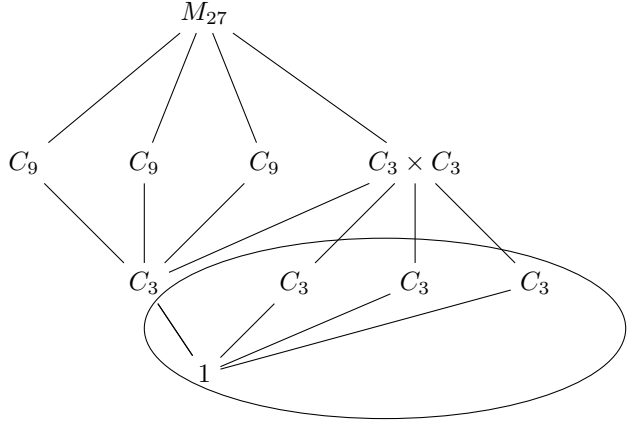
\begin{figure}
\centering
\begin{tikzpicture}[scale=.8]
  \node (one) at (0,4) {$M_{27}$};
  \node (a) at (-3,1.5) {$C_9$};
  \node (b) at (-1,1.5) {$C_9$};
  \node (c) at (1,1.5) {$C_9$};
  \node (d) at (3.5,1.5) {$C_3\times C_3$};
  \node (e) at (3.5,-.5) {$C_3$};
  \node (f) at (1.5,-.5) {$C_3$};
  \node (g) at (5.5,-.5) {$C_3$};
  \node (h) at (-1,-.5) {$C_3$};
  \node (zero) at (0,-2) {$1$};
  \draw (a) -- (one) -- (b) -- (h) -- (c) -- (one) -- (d);
  \draw (a) -- (h) -- (zero);
  \draw (d) -- (e) -- (zero) -- (f) -- (d) -- (h) -- (zero) -- (g) -- (d);
  \draw (3,-1.25) ellipse (4cm and 1.5cm);
\end{tikzpicture}
\caption{The subgroup diagram of $G=M_{27}$, i.e., the extraspecial group of order 27 and exponent 9. The four subgroups not in $\CD(G)$ are contained wthin the ellipse in the figure. This is the only nonabelian group with $\delta_{CD}(G) = 4$.}
\label{fig: M27}
\end{figure}

Combining Theorems \ref{thm: lt52} and \ref{thm: lt53} provides the proof of Theorem \ref{thm: lt5}.

\begin{proof}[Proof of Theorem \ref{thm: lt5}]
If $G$ is nonabelian, then $G$ is either a $2$-group or a $3$-group by Lemma \ref{lem: lt5->p-group}. Combining Theorem \ref{thm: lt52} and \ref{thm: lt53}, we see that $G$ must be the extraspeical group of order $27$ and exponent $9$. 

If $G$ is abelian, then we know that $\delta_{\CD}(G)$ is equal to the number of subgroups of $G$ minus 1. Counting the subgroups of abelian groups finishes the proof.
\end{proof}

\section{Conclusion.}
We have extended the results by Fasol{\u{a}} and T{\u{a}}rn{\u{a}}uceanu \cite{FT} by classifying all finite groups with $3\leq \delta_{\CD}(G)\leq 4$. Obvious questions exist about classifying all groups where $\delta_{\CD}(G) = k$ for $k\geq 5$. In particular, for $\delta_{\CD}(G) = 5$ combining Theorem \ref{thm: s3} and Lemma \ref{lem: lt5->p-group} means one only needs to classify nonabelian $2$-groups and $3$-groups with $\delta_{\CD}(G) = 5$. 

Corollary  \ref{cor: sylow_count}  establishes that $\delta_{\CD}(G)$ is greater than the number of distinct prime divisors of a group $G$. Is it the case that for a fixed $k$ we can also bound the multiplicity of a prime divisor of $G$? We ask this in three distinct, but highly related questions:

\begin{question}
Let $n$ be a positive integer not equal to 8. If $G$ has order $n$, then is $\delta_{\CD}(G) \geq \delta_{\CD}(C_n)$ = number of proper divisors of $n$. 
\end{question}

\begin{question}
Let $G$ be a finite group. If $G$ has exactly $k$ prime divisors up to multiplicity, is it the case that $\delta_{\CD}(G)$ is bounded by a function depending on $k$?
\end{question}

\begin{question}

Let $G$ be a finite group. If $H\leq G$ has order $p^k$ for some prime $p$ and integer $k$, then is 
\[\delta_{\CD}(G) \geq \begin{cases} k & \text{ $p$ is odd,} \\k-2 & \text{ $p = 2$.} \end{cases}\]

\end{question}

Theorem \ref{thm: nil} shows that when $\delta_{\CD}(G) < 5$, we have that $G$ is nilpotent. One can ask the same question for solvable groups.

\begin{question}
What is the maximal value of $m$, such that if a finite group satisfies $\delta_{\CD}(G) < m$, then $G$ must be solvable?
\end{question}

We note that Theorem \ref{thm: lt5} shows that there is no nonabelian group $G$ with $\delta_{CD}(G) = 3$. 

\begin{question}
Is there a positive integer $m$ such that for every $k\geq m$ there is a nonabelian group $G$ with $\delta_{\CD}(G) = m$. 
\end{question}


\bibliographystyle{amsalpha}
\bibliography{ref}

\end{document}